\newtheorem{theorem}{\bf Theorem}[section]
\newtheorem{lemma}[theorem]{\bf Lemma}
\begin{document}
\title{$k$-Pell-Lucas numbers as Product of Two Repdigits}
\author{Bibhu Prasad Tripathy and Bijan Kumar Patel}
\date{}
\maketitle
\begin{abstract}
For any integer $k \geq 2$, let $\{Q_{n}^{(k)} \}_{n \geq -(k-2)}$ denote the $k$-generalized Pell-Lucas sequence which starts with $0, \dots ,2,2$($k$ terms) where each next term is the sum of the $k$ preceding terms. In this paper, we find all the $k$-generalized Pell-Lucas numbers that are the product of two repdigits. This generalizes a result of Erduvan and Keskin \cite{Erduvan1} regarding repdigits of Pell-Lucas numbers.
\end{abstract} 

\noindent \textbf{\small{\bf Keywords}}: $k$-Pell-Lucas numbers, linear forms in logarithms, repdigits, reduction method. \\
{\bf 2020 Mathematics Subject Classification:} 11B39; 11J86; 11R52.

\section{Introduction}
The Pell-Lucas sequence $\{Q_n\}_{n\geq 0}$ is the binary recurrence sequence defined by 
\[
Q_{n+2} = 2 Q_{n+1} + Q_{n}~~ {\rm for} ~n\geq 0
\]
with initials $Q_0 = 2$ and $Q_1 = 2$. 

Let $ k \geq 2 $ be an integer. We consider the $k$-generalized Pell-Lucas sequence $\{Q_{n}^{(k)} \}_{n \geq -(k-2)}$, which is a generalization of the Pell-Lucas sequence and is given by the recurrence
\begin{equation}\label{eq 1.1}
  Q_{n}^{(k)} = 2 Q_{n-1}^{(k)} + Q_{n-2}^{(k)} + \dots + Q_{n-k}^{(k)} = \sum_{i=1}^{k} Q_{n-i}^{(k)} ~~\text{for all}~n \geq 2,  
\end{equation}
with initials $Q_{-(k-2)}^{(k)} = Q_{-(k-3)}^{(k)} = \dots = Q_{-1}^{(k)} = 0,  Q_{0}^{(k)} = 2$ and $Q_{1}^{(k)} = 2$.

We shall refer to $Q_{n}^{(k)}$  as the $k$-Pell-Lucas sequence.  We see that this generalization is a family of sequences, with each new choice of $k$ producing a unique sequence. For example, if $k = 2$, we get $Q_{n}^{(2)} = Q_{n}$, the classical Pell-Lucas sequence. For $k = 3$, we have the  3-Pell-Lucas sequence. They are followed by the 4-Pell-Lucas sequence for $k = 4$, and so on.

A positive integer of the form $\frac{a(10^{l} - 1)}{9}$, for any $l \geq 1$ and $1 \leq a \leq 9$, is said to be a repdigit if its decimal expansion only contains one distinct digit. There are many literature in number theory concerning Diophantine equations involving binary recurrence sequences of positive integers and repdigits. First Faye and Luca \cite{Faye} found that $Q_{1}= 2 $ and $Q_{2}= 6$ are the only repdigits in the Pell-Lucas sequence. Later  Adegbindin et al.  \cite{Adegbindin} showed that the largest Pell-Lucas number which is a sum of two repdigits is $Q_{6} = 198 = 99 + 99$. Consequently, Adegbindin et al. \cite{Adegbindin1} found that the largest Pell-Lucas number which is a sum of three repdigits is $Q_{10} = 6726 = 6666 + 55 + 5$. Following that Erduvan and Keskin \cite{Erduvan1} showed  that $Q_{6} = 198$ is the largest Pell-Lucas number which is a product of two repdigits. 

Recently, Rihane \cite{Rihane} extended the previous work of \cite{Erduvan} and showed that $F_{8}^{(3)}
 = 4 \cdot 11$ and $F_{8}^{(3)}
 = 22 \cdot 88 = 44^{2}$ are the only $k$-Fibonacci number $F_{n}^{(k)}$ which are product of two repdigits. He also found in the same paper  that $L_{9}^{(4)} = 7 \cdot 44 = 4 \cdot 77, L_{10}^{(4)} = 9 \cdot 66 = 6 \cdot 99$ and $ L_{9}^{(5)} = 8 \cdot 44 = 4 \cdot 88$ are the only $k$-Lucas number $L_{n}^{(k)}$ that can be expressed as product of two repdigits. Consequently, he \cite{Rihane} also extended the previous work of \cite{Erduvan1} and studied it in $k$-Pell sequence.

Motivated by the above literature, we solve the Diophantine equation 
\begin{equation}\label{eq 1.2}
  Q_{n}^{(k)} = \frac{a(10^{l}-1)}{9} \cdot \frac{b(10^{m}-1)}{9}.
\end{equation}
More precisely, we find all the $k$-Pell-Lucas numbers that are  product of two repdigits. In particular, we have the following result.
\begin{theorem}\label{thm1}
  All the solutions of the Diophantine equation \eqref{eq 1.2} in positive integers $n, m, l , k , a$ and $b$  with $k \geq 3$, $1 \leq l \leq m, 1 \leq a \leq 9$  and $1 \leq b \leq 9$ are
\begin{align*}
& Q_{1}^{(k)} = 1 \cdot 2  = 2 \cdot 1, k \geq 3, & Q_{2}^{(k)} = 1 \cdot 6  = 2 \cdot 3 = 3 \cdot 2 = 6 \cdot 1 , k \geq 3, \\
& Q_{3}^{(k)} = 2 \cdot 8  = 4 \cdot 4 = 8 \cdot 1, k \geq 3, & Q_{4}^{(3)} = 5 \cdot 8  = 8 \cdot 5, Q_{4}^{(k)} = 6 \cdot 7  = 7 \cdot 6, k \geq  4,  \\
& Q_{5}^{(k)} = 22 \cdot 5  = 55 \cdot 2 = 2 \cdot 55 = 5 \cdot 22, k \geq 3, & Q_{7}^{(4)} = 11 \cdot 66  = 66 \cdot 11 = 22 \cdot 33 = 33 \cdot 22.
\end{align*} 
\end{theorem}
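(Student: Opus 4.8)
The plan is to treat \eqref{eq 1.2} as a Diophantine equation and attack it with Baker's method (linear forms in logarithms) followed by a reduction procedure. First I would record the analytic tools for $\{Q_n^{(k)}\}$: the dominant root $\gamma=\gamma(k)$ of the characteristic polynomial $x^k-2x^{k-1}-\cdots-x-1$, which satisfies $1+\sqrt2\le\gamma<\gamma_\infty$ uniformly in $k$, where $\gamma_\infty=\frac{3+\sqrt5}{2}$ is the limiting root, while the remaining roots lie inside the unit disk; the Binet-type formula $Q_n^{(k)}=f_k(\gamma)\gamma^n+(\text{error})$ with error exponentially small in the admissible range; the two-sided estimate $\gamma^{\,n-2}\le Q_n^{(k)}\le\gamma^{\,n-1}$; and the height bounds $h(\gamma)=O(1/k)$ and $h(f_k(\gamma))=O(\log k)$. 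Since an $l$-digit repdigit lies in $[10^{l-1},10^l)$, the equation forces $10^{\,l+m-2}\le Q_n^{(k)}<10^{\,l+m}$, so $n$ and $l+m$ are comparable, each bounded by an absolute constant times the other. A separate short discussion disposes of the \emph{stable range} $n\le k+1$, where $Q_n^{(k)}$ depends only on $n$ (the values $2,2,6,16,42,110,\ldots$); there the problem collapses to a finite explicit check once $n$ is bounded below.

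Next I would set up the first linear form by isolating the dominant term against the leading part $\frac{ab}{81}10^{\,l+m}$ of the right-hand side of \eqref{eq 1.2}. Writing $\Lambda_1=1-\frac{ab}{81\,f_k(\gamma)}10^{\,l+m}\gamma^{-n}$, the repdigit and Binet errors yield $|\Lambda_1|<c\cdot 10^{-l}$. Applying Matveev's theorem to the three algebraic numbers $\gamma$, $10$, and $\frac{ab}{81\,f_k(\gamma)}$ in the field $\mathbb{Q}(\gamma)$ of degree $k$, and using the height bounds above, produces a lower bound for $|\Lambda_1|$ and hence an upper bound $l<C_1\,k^{O(1)}(\log k)^{O(1)}\log n$. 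I would then regroup \eqref{eq 1.2} so as to isolate $10^m$, setting $\Lambda_2=1-\frac{ab(10^l-1)}{81\,f_k(\gamma)}10^{m}\gamma^{-n}$, for which $|\Lambda_2|<c\,\gamma^{-n}$. A second application of Matveev bounds $n$ in terms of $l$ and $k$; feeding in the bound on $l$ and using $n\asymp l+m$ gives an absolute but very large bound $n<C_2\,k^{O(1)}(\log k)^{O(1)}$.

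The delicate point is converting the dependence on $k$ into an absolute bound, and this I expect to be the principal obstacle. In the deep range $n\ge k+2$ I would exploit the uniform estimate $|\gamma-\gamma_\infty|=O(\gamma_\infty^{-k})$ together with the height bounds to rerun the linear-form analysis uniformly in $k$, extracting a bound of the shape $k=O(\log n)$; combined with $n<C_2\,k^{O(1)}(\log k)^{O(1)}$ this forces both $k$ and $n$ below explicit absolute constants, while the stable range $n\le k+1$ is already finite once $n$ is bounded.

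With $k,n,l,m$ confined to explicit (astronomically large) intervals, I would then apply the reduction method—the Baker--Davenport lemma, or LLL applied to $\Lambda_1$ and $\Lambda_2$—to collapse the bound on $n$ to a small number, and finish with a finite computer search over the remaining ranges of $(n,k,l,m,a,b)$, which should recover exactly the solutions listed in the theorem. The hardest part will be the uniform-in-$k$ control: both validating the Binet error throughout $n\ge k+2$ and producing the $k=O(\log n)$ bound rely on the height estimate for $f_k(\gamma)$ and the exponentially small approximation of $\gamma$ by $\gamma_\infty$, and keeping these effective and explicit is what makes the whole argument close.
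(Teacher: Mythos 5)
Your plan follows the paper's architecture quite closely: the same two Matveev applications over $\mathbb{Q}(\gamma)$ (first against $\frac{ab}{81}10^{l+m}$ to bound $l$, then, after dividing by the first repdigit, against $\frac{b}{9}10^{m}$ to get $n\ll k^{O(1)}\log^{O(1)}k$, cf.\ Lemma \ref{lem3.1}), then the replacement of $\gamma$ by $\phi^{2}$ to rerun the analysis uniformly in $k$ (Lemma \ref{lem 2.9} and Lemma \ref{lem3.2}), then reduction and a finite search. The genuine gap is in the endgame. After the uniform step, $k$ and $n$ are only bounded by astronomical constants (the paper gets $k<4.1\times 10^{34}$ and $n<5.37\times 10^{350}$), and you propose to reduce by applying Baker--Davenport/de Weger/LLL to $\Lambda_{1}$ and $\Lambda_{2}$, i.e.\ to forms built from $\log\gamma(k)/\log 10$ and $g_{k}(\gamma)$. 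These data depend on $k$, so the reduction has to be run separately for every value of $k$ in the admissible range, which is impossible over $\sim 10^{34}$ values. The step you are missing is that for $k$ beyond a threshold the \emph{reduction itself} must be performed on the $\phi$-based forms ($\Lambda_{3}$, $\Lambda_{4}$ in the paper), which involve only the two fixed numbers $\phi$ and $10$ together with the finitely many parameters $a,b,l$; iterating that uniform reduction drives the bound on $k$ below the threshold ($k<634$ versus $k>640$), producing a contradiction, and only then is a $k$-by-$k$ reduction on $\Lambda_{1},\Lambda_{2}$ feasible, over the small range $k\in[3,640]$. Relatedly, the $\phi^{2}$-approximation needs $n<\phi^{k/2}$, and given only $n<Ck^{9}\log^{5}k$ this is valid precisely when $k$ is large; so the explicit small-$k$/large-$k$ split is not an optional refinement but the load-bearing structure, and your sketch never fixes the threshold or explains how small $k$ is finished.

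A second gap is the stable range. There $Q_{n}^{(k)}=2F_{2n}$, but $k$ is unbounded, so $n$ is unbounded in that range as well; your deep-range bounds say nothing there, and the claim that it "collapses to a finite explicit check once $n$ is bounded below" is not a disposal---nothing in your argument bounds $n$ in that range. One must solve $2F_{2n}=\frac{a(10^{l}-1)}{9}\cdot\frac{b(10^{m}-1)}{9}$ as an infinite Diophantine problem in its own right (a full Baker-plus-reduction run for a binary recurrence); the paper does this by invoking \cite[Theorem~4]{Erduvan}. Finally, some slips that do not affect the overall shape but would need repair in a complete write-up: the two-sided estimate is $\gamma^{n-1}\leq Q_{n}^{(k)}\leq 2\gamma^{n}$ as in \eqref{eq 2.14}, not $\gamma^{n-2}\leq Q_{n}^{(k)}\leq \gamma^{n-1}$ (the latter already fails at $n=1$); the height bound actually available for the Binet coefficient is $h(g_{k}(\gamma))=O(k\log k)$ as in \eqref{eq 2.15}, not $O(\log k)$; and the second form satisfies $|\Lambda_{2}|=O(10^{-m})=O(10^{l}\gamma^{-n})$ as in \eqref{eq 3.31}, not $O(\gamma^{-n})$.
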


Our proof of Theorem \ref{thm1} is mainly based on linear forms in logarithms of algebraic numbers and a reduction algorithm originally introduced by Baker and Davenport \cite{Baker} (and improved by Dujella and Peth\H{o} \cite{Dujella}). Here, we also use a variant due to de Weger \cite{Weger} to reduce the upper bound to a size that is simpler to handle for the small case. Here we have also derived some estimates for large case by using the fact that the dominant root of the $k$-Pell-Lucas sequence is exponentially close to $\phi^2$ {\rm{(see \cite{Siar})}} where $\phi = \frac{1 + \sqrt{5}}{2}$. Consequently, one may substitute this root by $\phi$ in further calculation with linear forms in logarithms to obtain absolute upper bounds for all the variables, which we then reduce using again the de Weger algorithm.

\section{Auxiliary results}
 In this section, we recall the concept of a lower bound for a non-zero linear forms in logarithms of algebraic
numbers. We also discuss some fundamental properties and results of $k$-Pell-Lucas sequence.

\subsection{Linear forms in logarithms}
Let $\gamma$ be an algebraic number of degree $d$ with minimal primitive polynomial 
\[
f(Y):= b_0 Y^d+b_1 Y^{d-1}+ \cdots +b_d = b_0 \prod_{j=1}^{d}(Y- \gamma^{(j)}) \in \mathbb{Z}[Y],
\]
where the $b_j$'s are relatively prime integers, $b_0 >0$, and the $\gamma^{(j)}$'s are conjugates of $\gamma$. Then the \emph{logarithmic height} of $\gamma$ is given by
\begin{equation}\label{eq 2.3}
h(\gamma)=\frac{1}{d}\left(\log b_0+\sum_{j=1}^{d}\log\left(\max\{|\gamma^{(j)}|,1\}\right)\right).
\end{equation}
With the above notations, Matveev (see  \cite{Matveev} or  \cite[Theorem~9.4]{Bugeaud}) proved the following result.

\begin{theorem}\label{thm2}
Let $\eta_1, \ldots, \eta_s$ be positive real algebraic numbers in a real algebraic number field $\mathbb{L}$ of degree $d_{\mathbb{L}}$. Let $a_1, \ldots, a_s$ be non-zero  integers such that
\[
\Lambda :=\eta_1^{a_1}\cdots\eta_s^{a_s}-1 \neq 0.
\]
Then
\[
- \log  |\Lambda| \leq 1.4\cdot 30^{s+3}\cdot s^{4.5}\cdot d_{\mathbb{L}}^2(1+\log d_{\mathbb{L}})(1+\log D)\cdot B_1 \cdots B_s,
\]
where
\[
D\geq \max\{|a_1|,\ldots,|a_s|\},
\]
and
\[
B_j\geq \max\{d_{\mathbb{L}}h(\eta_j),|\log \eta_j|, 0.16\}, ~ \text{for all} ~ j=1,\ldots,s.
\]
\end{theorem}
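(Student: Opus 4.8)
The plan is to establish the stated lower bound by the Gel'fond--Baker method in the quantitative form refined by Matveev; I stress at the outset that this is a deep transcendence theorem, so only the architecture can be sketched here. Since $\mathbb{L}$ is real and each $\eta_j$ is a positive real algebraic number, set $\Lambda = \eta_1^{a_1}\cdots\eta_s^{a_s}-1$ and let $L = a_1\log\eta_1+\cdots+a_s\log\eta_s$, so that $\Lambda = e^{L}-1$; hence $|\Lambda|$ and $|L|$ are of comparable size when $\Lambda$ is small, and it suffices to bound $-\log|L|$ from above in terms of $B_1,\ldots,B_s$, $d_{\mathbb{L}}$ and $D$. Arguing by contradiction, I would assume $|L|$ (equivalently $|\Lambda|$) is smaller than the asserted bound and derive a contradiction. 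I would normalise by setting $B=\max_j|a_j|\le D$ and introducing construction parameters (degrees in auxiliary variables and a height parameter) to be optimised at the end against the $B_j$'s.

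First I would construct an auxiliary function. Using Siegel's lemma (the pigeonhole principle for linear systems over $\mathbb{Z}$, or over the ring of integers of $\mathbb{L}$), I would produce a nonzero polynomial with algebraic-integer coefficients of controlled degree and logarithmic height such that the exponential polynomial
\[
\Phi(z)=\sum_{\lambda_1,\ldots,\lambda_s} p(\lambda_1,\ldots,\lambda_s)\,\eta_1^{\lambda_1 z}\cdots\eta_s^{\lambda_s z}
\]
(or a multivariate analogue carrying an extra $z^{\lambda_0}$ factor) vanishes together with many of its derivatives at the integers $z=0,1,\ldots$ up to a first range. The system has more unknowns than equations, so a nonzero solution exists; the key is to keep the coefficient heights small, and it is exactly here that the heights $h(\eta_j)$---entering through $B_j\ge d_{\mathbb{L}}h(\eta_j)$---appear in the final bound.

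Next comes the extrapolation step, the heart of the method. Writing $\eta_s^{\,z}=e^{z\log\eta_s}$ and using $\log\eta_s=a_s^{-1}(L-\sum_{j<s}a_j\log\eta_j)$, the hypothesis that $L$ is minuscule lets one replace $\log\eta_s$ by that combination up to a negligible error. Via the Schwarz lemma and the maximum-modulus principle, the many known zeros force $\Phi$ and its derivatives to be extremely small on a \emph{much larger} set of points than were used in the construction; on the other hand these values are nonzero algebraic numbers of bounded height, so the Liouville inequality forbids them from being that small. Iterating this dichotomy shows $\Phi$ vanishes to high order on a large lattice, whereupon a zero (multiplicity) estimate---a Philippon-type bound, or in Matveev's treatment an explicit combinatorial/determinant nonvanishing argument---forces the coefficients $p(\lambda_1,\ldots,\lambda_s)$ to vanish, contradicting the choice from Siegel's lemma. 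The falsity of the smallness assumption then yields the desired lower bound on $|\Lambda|$.

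The main obstacle---and precisely Matveev's advance over the earlier Baker--Wüstholz form---is \emph{quantitative}: producing the clean explicit constant of shape $1.4\cdot 30^{s+3}s^{4.5}$ with the stated dependence on $d_{\mathbb{L}}$, and in particular a constant growing only like $C^{s}$ in the number of logarithms. This demands a very careful optimisation of the construction parameters against the extrapolation range and the zero estimate, together with Matveev's real-field refinement, which removes a $\pi$-type loss available only because $\mathbb{L}\subset\mathbb{R}$ and all $\eta_j$ are real. Tracking the algebraic heights through Siegel's lemma so that they assemble into the product $B_1\cdots B_s$, rather than a worse symmetric function, is the delicate bookkeeping that makes the bound usable in practice, and is the step I expect to consume most of the work.
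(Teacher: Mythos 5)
This statement is not proved in the paper at all: it is Matveev's theorem, quoted verbatim as an external black box with references to \cite{Matveev} and \cite[Theorem~9.4]{Bugeaud}, and the paper's own contribution begins only where this theorem is \emph{applied} (to $\Lambda_1,\dots,\Lambda_4$ in Section~3). So there is no in-paper proof to compare yours against; the relevant question is whether your attempt constitutes a proof, and it does not. What you have written is an accurate road map of the Gel'fond--Baker method --- auxiliary function via Siegel's lemma, extrapolation via the Schwarz lemma and maximum modulus principle, the Liouville height inequality blocking small nonzero algebraic values, and a multiplicity (zero) estimate to close the induction --- and you are candid that only the architecture is sketched. But every quantitatively decisive step is deferred: no choice of construction parameters, no statement (let alone proof) of the zero estimate you invoke, no execution of the height bookkeeping that produces the product $B_1\cdots B_s$, and no derivation of the constant $1.4\cdot 30^{s+3}s^{4.5}$ with the factor $d_{\mathbb{L}}^2(1+\log d_{\mathbb{L}})$. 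Since the entire content of Matveev's theorem over its qualitative predecessors \emph{is} these explicit constants, a sketch that defers them proves nothing that was not already known qualitatively from Baker; as you yourself note, this optimisation is ``the step I expect to consume most of the work,'' and it is exactly the step that is absent.

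Two smaller cautions. First, the reduction from $|\Lambda|$ to $|L|$ (``of comparable size'') needs the standard inequality relating $|e^x-1|$ and $|x|$ (compare Lemma~2.5 of the paper) and, in general, care about $L$ lying near $2\pi i\mathbb{Z}$; here the hypothesis that $\mathbb{L}$ is real and the $\eta_j$ positive makes $L$ real, which is worth saying explicitly since it is also the source of Matveev's improved constant in the real case (the factor usually written $\chi$ or $\varkappa$), rather than a ``removal of a $\pi$-type loss'' in the extrapolation itself. Second, your one-variable auxiliary function $\Phi(z)=\sum p(\lambda)\eta_1^{\lambda_1 z}\cdots\eta_s^{\lambda_s z}$ is the Feldman--Baker one-variable shape and can be made to work, but Matveev's actual argument is organised differently; if you intend to follow his paper you should say so and track his construction, and if you intend the classical route you must supply the multivariate or derivative-rich version of the extrapolation in full. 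For the purposes of a paper like this one, the correct move is the one the authors take: cite Matveev and do not reprove him.
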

The following lemma is a slight variation of a result due to Dujella and Peth\"{o} \cite[Lemma~5 (a)]{Dujella} and a generalisation of a Baker and Davenport result \cite{Baker}.

\subsection{The de Weger reduction algorithm}
Here, we present a variant of the reduction method of Baker and Davenport \cite{Baker}
(and improved by Dujella and Peth\H{o} \cite{Dujella}) due to de Weger \cite{Weger}.

Let $ \vartheta_{1}, \vartheta_{2}, \delta \in \mathbb{R}$ be given and let $x_{1}, x_{2} \in \mathbb{Z}$ be unknowns. Let 
\begin{equation} \label{eq 2.4}
    \Lambda = \delta + x_{1}\vartheta_{1} + x_{2}\vartheta_{2}. 
\end{equation}
Set $X = \max \{|x_{1},|x_{2}|\}$. Let $X_{0}, Y$ be positive. Assume that 
\begin{equation} \label{eq 2.5}
    |\Lambda| < c \cdot \exp{(-\rho \cdot Y)}
\end{equation}
and 
\begin{equation} \label{eq 2.6}
    Y \leq X \leq X_{0},
\end{equation}
where $c, \rho$ be positive constants.
\noindent
When $\delta = 0$ in \eqref{eq 2.4}, we get 
\[
\Lambda = x_{1}\vartheta_{1} + x_{2}\vartheta_{2}.
\]
Put $\vartheta = - \vartheta_{1}/\vartheta_{2}$. We assume that 
$x_{1}$ and $x_{2}$ are coprime. Let the continued fraction expansion of $\vartheta$ be given by 
\[
[a_{0}, a_{1}, a_{2}, \dots],
\]
and let the $k$th convergent of $\vartheta$ be $p_{k}/q_{k}$ for $k=0,1,2,\dots$. We may assume without loss of generality that $|\vartheta_{1}| < |\vartheta_{2}|$ and that $x_{1} > 0$. We have the following results.

\begin{lemma}\label{lem 2.2}
(\cite{Weger}, Lemma 3.1) If \eqref{eq 2.5} and \eqref{eq 2.6} hold for $x_{1}$, $x_{2}$ with $X \geq 1$ and $\delta = 0$, then $(-x_{2}, x_{1}) = (p_{k},q_{k})$ for an index $k$ that satisfies 
\[
k \leq -1 + \frac{\log(1+X_{0}\sqrt{5})}{\log\left( \frac{1+\sqrt{5}}{2}\right)} := Y_{0}.
\]
\end{lemma}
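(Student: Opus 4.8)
The plan is to read \eqref{eq 2.5}--\eqref{eq 2.6} as a statement about Diophantine approximation and to derive the conclusion from two classical facts about continued fractions: Legendre's criterion, that a rational $p/q$ in lowest terms with $|\vartheta - p/q| < 1/(2q^2)$ must be a convergent of $\vartheta$; and the Fibonacci-type growth $q_k \ge F_{k+1}$ of the convergent denominators. The strategy is first to turn the smallness of the homogeneous form $\Lambda = x_1\vartheta_1 + x_2\vartheta_2$ into a sharp rational approximation of $\vartheta$, and then to read off the index bound from the exponential growth of $q_k$.

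First I would normalize. Since $|\vartheta_1| < |\vartheta_2|$ we have $\vartheta_2 \neq 0$, so dividing $\Lambda = x_1\vartheta_1 + x_2\vartheta_2$ by $\vartheta_2$ and using $\vartheta = -\vartheta_1/\vartheta_2$ together with $\delta = 0$ gives
\[
\frac{\Lambda}{\vartheta_2} = x_2 - x_1\vartheta, \qquad \left|\vartheta - \frac{x_2}{x_1}\right| = \frac{|\Lambda|}{|\vartheta_2|\, x_1}.
\]
By \eqref{eq 2.5} the numerator is exponentially small, $|\Lambda| < c\exp(-\rho Y)$, whereas by \eqref{eq 2.6} the denominator $x_1 \le X \le X_0$ is only polynomially large; the quantitative heart of the argument is to verify that this forces the right-hand side to be strictly less than $1/(2x_1^2)$. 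Because $\gcd(x_1, x_2) = 1$ and $x_1 > 0$, the fraction $x_2/x_1$ is already in lowest terms with positive denominator, so Legendre's criterion forces it to be a convergent $p_k/q_k$ of $\vartheta$ with $q_k = x_1$; in de Weger's sign normalization this is exactly the recorded equality $(-x_2, x_1) = (p_k, q_k)$.

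It then remains to bound the index $k$. For the convergents of any real number one has $q_0 = 1$, $q_1 \ge 1$ and the recurrence $q_j = a_j q_{j-1} + q_{j-2} \ge q_{j-1} + q_{j-2}$, whence $q_k \ge F_{k+1}$ by induction. Combining this with the Binet estimate $F_{k+1} > (\phi^{k+1} - 1)/\sqrt5$ and $q_k = x_1 \le X_0$ yields $\phi^{k+1} < 1 + X_0\sqrt5$, that is,
\[
k \le -1 + \frac{\log(1 + X_0\sqrt5)}{\log\phi} = Y_0,
\]
which is the asserted bound.

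The step I expect to be the main obstacle is the quantitative comparison in the middle paragraph: showing that the bound \eqref{eq 2.5}, under the sole constraints $Y \le X \le X_0$ and $X \ge 1$, actually beats the Legendre threshold $1/(2x_1^2)$. This is exactly where the exponential strength of the linear-form estimate must be weighed against the merely polynomial size $q_k^2 \le X_0^2$ of the denominator; once this inequality is in hand, both the identification of the convergent and the index bound follow routinely from the continued-fraction machinery. A secondary point requiring care is the sign and lowest-terms bookkeeping needed to land on the precise pair $(-x_2, x_1)$.
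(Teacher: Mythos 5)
First, a point of comparison: the paper offers no proof of this lemma at all --- it is quoted from de Weger \cite{Weger} (his Lemma 3.1) --- so your proposal can only be judged on its own terms. Your overall route, Legendre's criterion plus Fibonacci-type growth of convergent denominators, is indeed the classical one, and your final paragraph is correct and complete: from $q_k = x_1 \leq X_0$ and $F_{k+1} > (\phi^{k+1}-1)/\sqrt{5}$ one gets $\phi^{k+1} < 1 + X_0\sqrt{5}$, hence $k \leq Y_0$.

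The genuine gap is exactly the step you postpone and call ``the main obstacle'', and it is worse than an obstacle: the Legendre threshold cannot be reached from \eqref{eq 2.5} and \eqref{eq 2.6} as stated, because $c$ and $\rho$ are arbitrary positive constants and $Y$ is bounded only above (by $X$), never below. Indeed the statement as transcribed is false. Take $\vartheta_1 = 1$, $\vartheta_2 = -\pi$, so $\vartheta = 1/\pi = [0;3,7,15,1,292,\dots]$; take $(x_1,x_2) = (2,1)$ (coprime, $x_1 > 0$, $|\vartheta_1| < |\vartheta_2|$), and $Y = X = X_0 = 2$, $c = 10$, $\rho = 1$. Then $|\Lambda| = \pi - 2 < 10e^{-2}$, so \eqref{eq 2.5}, \eqref{eq 2.6} and $X \geq 1$ all hold, yet no convergent of $1/\pi$ has denominator $2$ (the $q_k$ are $1, 3, 22, 333, \dots$), so neither $(x_2,x_1)$ nor $(-x_2,x_1)$ is a convergent pair. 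What is missing is a smallness hypothesis, in effect $c\exp(-\rho Y) \leq |\vartheta_2|/(2X_0)$, which guarantees $|\Lambda| < |\vartheta_2|/(2x_1)$; the reduction method operates in that regime, and the complementary case $|\Lambda| \geq |\vartheta_2|/(2x_1)$ is precisely what the factor $A+2 \geq 3$ in Lemma \ref{lem 2.3} absorbs, since there one gets directly $Y < \frac{1}{\rho}\log\left(2cX/|\vartheta_2|\right)$. So your plan can only be completed by adding such a hypothesis, or by proving the dichotomy (either that bound on $Y$, or the convergent conclusion) rather than the statement as quoted.

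A secondary flaw is the sign bookkeeping you delegate to ``de Weger's sign normalization''. With the paper's own definition $\vartheta = -\vartheta_1/\vartheta_2$, your identity $\Lambda/\vartheta_2 = x_2 - x_1\vartheta$ makes Legendre identify $x_2/x_1$ (not $-x_2/x_1$) with $p_k/q_k$, so the provable conclusion is $(x_2,x_1) = (p_k,q_k)$; the quoted $(-x_2,x_1) = (p_k,q_k)$ belongs to the opposite convention $\vartheta = \vartheta_1/\vartheta_2$. This mismatch is a transcription inconsistency in the paper itself, but invoking an unspecified normalization does not bridge it; a correct write-up must fix one convention and carry it through.
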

\begin{lemma}\label{lem 2.3}
(\cite{Weger}, Lemma 3.2) Let 
\[
A = \displaystyle\max_{0 \leq k \leq Y_{0}} a_{k+1}.
\]
If \eqref{eq 2.5} and \eqref{eq 2.6} hold for $x_{1}$, $x_{2}$ with $X \geq 1$ and $\delta = 0$, then 
\begin{equation}\label{eq 2.7}
Y < \frac{1}{\rho} \log \left( \frac{c(A+2)}{|\vartheta_{2}|} \right) + \frac{1}{\rho} \log X  < \frac{1}{\rho} \log \left( \frac{c(A+2)X_{0}}{|\vartheta_{2}|} \right).    
\end{equation}
\end{lemma}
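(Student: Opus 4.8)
The plan is to turn the conclusion of Lemma~\ref{lem 2.2} into a lower bound for $|\Lambda|$ and then read off the upper bound for $Y$ by confronting it with the hypothesis \eqref{eq 2.5}. Since $\delta = 0$, Lemma~\ref{lem 2.2} supplies an index $k \le Y_0$ with $(-x_2, x_1) = (p_k, q_k)$, so that $x_1 = q_k$ and $x_2 = -p_k$. First I would substitute this identification into $\Lambda = x_1\vartheta_1 + x_2\vartheta_2$ and factor out $\vartheta_2$; using $\vartheta = -\vartheta_1/\vartheta_2$, a short computation (with careful sign bookkeeping) gives $|\Lambda| = |\vartheta_2|\,|q_k\vartheta - p_k|$, which reduces the problem to estimating how far the convergent $p_k/q_k$ lies from $\vartheta$.

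The key step is the classical lower bound for $|q_k\vartheta - p_k|$ in terms of the next partial quotient. Writing $\theta_{k+1} = [a_{k+1}; a_{k+2}, \dots]$ for the $(k+1)$-st complete quotient and using the standard identity $\vartheta = (p_k\theta_{k+1} + p_{k-1})/(q_k\theta_{k+1} + q_{k-1})$ together with $p_kq_{k-1} - p_{k-1}q_k = (-1)^{k-1}$, one finds
\[
|q_k\vartheta - p_k| = \frac{1}{q_k\theta_{k+1} + q_{k-1}}.
\]
Since $\theta_{k+1} = a_{k+1} + 1/\theta_{k+2}$ with $\theta_{k+2} > 1$ we have $\theta_{k+1} < a_{k+1} + 1$, and with $q_{k-1} \le q_k$ it follows that $q_k\theta_{k+1} + q_{k-1} < (a_{k+1}+2)q_k$. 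As $0 \le k \le Y_0$ forces $a_{k+1} \le A$, this yields
\[
|q_k\vartheta - p_k| > \frac{1}{(a_{k+1}+2)q_k} \ge \frac{1}{(A+2)q_k}.
\]
Recalling $q_k = x_1 \le X$, I would combine the two displays to obtain $|\Lambda| > |\vartheta_2|\big/\big((A+2)X\big)$.

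To conclude, I confront this lower bound with the hypothesis \eqref{eq 2.5}, namely $|\Lambda| < c\exp(-\rho Y)$, which gives $|\vartheta_2|\big/\big((A+2)X\big) < c\exp(-\rho Y)$. Taking logarithms and dividing by $\rho > 0$ produces
\[
Y < \frac{1}{\rho}\log\left(\frac{c(A+2)X}{|\vartheta_2|}\right) = \frac{1}{\rho}\log\left(\frac{c(A+2)}{|\vartheta_2|}\right) + \frac{1}{\rho}\log X,
\]
which is the first inequality in \eqref{eq 2.7}; the second follows at once from $X \le X_0$ in \eqref{eq 2.6} and the monotonicity of the logarithm.

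The only genuinely delicate points I anticipate are the sign bookkeeping in passing from $\Lambda$ to $q_k\vartheta - p_k$ and the continued-fraction estimate for $|q_k\vartheta - p_k|$; once these are in place the rest is a routine manipulation of the two competing bounds on $|\Lambda|$. Since the statement is quoted from de Weger's Lemma~3.2, one may alternatively just cite \cite{Weger}, but the argument above reconstructs it from Lemma~\ref{lem 2.2} and elementary continued-fraction theory.
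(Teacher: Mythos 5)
Your argument is correct and is precisely the standard proof of de Weger's Lemma~3.2, which the paper itself only cites without reproving: identify $(x_1,x_2)$ with a convergent via Lemma~\ref{lem 2.2}, bound $|q_k\vartheta - p_k|$ from below by $1/\bigl((A+2)q_k\bigr)$ using the complete quotient $\theta_{k+1}$ and $q_{k-1}\leq q_k$, and then play this lower bound for $|\Lambda|$ off against \eqref{eq 2.5}. One small remark on the ``sign bookkeeping'' you flagged: taken literally, the paper's transcription of Lemma~\ref{lem 2.2} (namely $(-x_2,x_1)=(p_k,q_k)$ together with $\vartheta=-\vartheta_1/\vartheta_2$) would give $|\Lambda|=|\vartheta_2|\,|q_k\vartheta+p_k|$, which is not small; your identity $|\Lambda|=|\vartheta_2|\,|q_k\vartheta-p_k|$ corresponds to the internally consistent convention $(x_2,x_1)=(p_k,q_k)$, and since only the magnitude enters, the conclusion \eqref{eq 2.7} is unaffected.
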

\noindent
When $\delta \neq 0$ in \eqref{eq 2.4}, put $\vartheta = - \vartheta_{1}/\vartheta_{2}$ and $\psi = \delta / \vartheta_{2}$. Then we have \[
\frac{\Lambda}{\vartheta_{2}} = \psi - x_{1}\vartheta + x_{2}.
\]
Let $p/q$ be a convergent of $\vartheta$ with $q > X_{0}$. For a real number $x$ we let $
\|x\| = \min \{|x-n|: n\in \mathbb{Z} \}$ be the distance from $x$ to the nearest integer. We have the following result.

\begin{lemma}\label{lem 2.4}
(\cite{Weger}, Lemma 3.3) Suppose that 
\[
\|q \psi \| > \frac{2X_{0}}{q}.
\]
Then, the solutions of \eqref{eq 2.5} and \eqref{eq 2.6} satisfy 
\[
Y < \frac{1}{\rho} \log \left( \frac{q^{2}c}{|\vartheta_{2}|X_{0}} \right).
\]
\end{lemma}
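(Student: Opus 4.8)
The plan is to exploit the quality of rational approximation provided by the convergent $p/q$ of $\vartheta$ and to convert the hypothesis on $\|q\psi\|$ into a lower bound for $|\Lambda|$, which can then be played against the upper bound in \eqref{eq 2.5}. First I would start from the identity $\Lambda/\vartheta_2 = \psi - x_1\vartheta + x_2$, multiply through by $q$, and write $q\vartheta = p + (q\vartheta - p)$ so as to isolate the integer part. This gives
\[
q\psi - (x_1 p - x_2 q) = \frac{q\Lambda}{\vartheta_2} + x_1(q\vartheta - p).
\]
Since $x_1 p - x_2 q$ is an integer, the absolute value of the left-hand side is an upper bound for the distance $\|q\psi\|$ from $q\psi$ to the nearest integer.

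Next I would estimate the two terms on the right. The classical convergent inequality $|\vartheta - p/q| < 1/q^2$ yields $|q\vartheta - p| < 1/q$, and, combined with $|x_1| \le X \le X_0$ from \eqref{eq 2.6}, this gives $|x_1(q\vartheta - p)| < X_0/q$. For the first term I would invoke the assumed bound \eqref{eq 2.5}, so that $|q\Lambda/\vartheta_2| < (qc/|\vartheta_2|)\exp(-\rho Y)$. The triangle inequality then produces
\[
\|q\psi\| < \frac{qc}{|\vartheta_2|}\exp(-\rho Y) + \frac{X_0}{q}.
\]

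The decisive step is to use the hypothesis $\|q\psi\| > 2X_0/q$: subtracting $X_0/q$ from both sides cancels the continued-fraction error term and leaves $X_0/q < (qc/|\vartheta_2|)\exp(-\rho Y)$, that is, $\exp(-\rho Y) > X_0|\vartheta_2|/(q^2 c)$. Taking logarithms and dividing by the positive constant $\rho$ then gives the claimed estimate $Y < \rho^{-1}\log\bigl(q^2 c/(|\vartheta_2| X_0)\bigr)$. I do not expect a substantial obstacle here; the only genuinely delicate point is the bookkeeping in the first step, where one must choose precisely the integer $n = x_1 p - x_2 q$ (rather than the nearest integer to $q\psi$) so that the residual $q\psi - n$ is expressible through $\Lambda$ and the approximation error. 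The technical assumption $q > X_0$ plays no explicit role in the chain of inequalities; its purpose is only to render the hypothesis $\|q\psi\| > 2X_0/q$ satisfiable, since the left-hand side never exceeds $1/2$, which in practice forces $q$ to be chosen well beyond $X_0$.
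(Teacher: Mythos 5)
Your proof is correct: the identity $q\psi-(x_1p-x_2q)=q\Lambda/\vartheta_2+x_1(q\vartheta-p)$, the convergent bound $|q\vartheta-p|<1/q$ with $|x_1|\leq X\leq X_0$, and the hypothesis $\|q\psi\|>2X_0/q$ combine exactly as you say to give $Y<\rho^{-1}\log\bigl(q^2c/(|\vartheta_2|X_0)\bigr)$. The paper itself gives no proof---it cites the result as Lemma 3.3 of de Weger---and your argument is precisely the standard one from that source, including the correct observation that $q>X_0$ only serves to make the hypothesis on $\|q\psi\|$ satisfiable (indeed $\|q\psi\|\leq 1/2$ forces $q>4X_0$).
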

\begin{lemma}\label{lem 2.5} (\cite{Weger})
Let $a, x \in \mathbb{R}$. If $0< a < 1$ and $|x| < a$, then 
\[
|\log(1+x)| < \frac{-\log(1-a)}{a}\cdot |x|
\]
and 
\[
|x| < \frac{a}{1 - e^{-a}} \cdot |e^{x} - 1|.
\]
\end{lemma}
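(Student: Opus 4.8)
The plan is to treat the two inequalities separately, in each case reducing the bound to the monotonicity of a single auxiliary function of one real variable. Throughout I would assume $x \neq 0$: for $x = 0$ both sides of each inequality vanish, so the strict inequalities are only meaningful (and are to be proved) in the nontrivial range $0 < |x| < a$.

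For the first inequality I would begin from the Taylor expansion $\log(1+x) = \sum_{n \ge 1} (-1)^{n-1} x^n / n$, which converges absolutely since $|x| < a < 1$. Applying the triangle inequality term by term gives $|\log(1+x)| \le \sum_{n \ge 1} |x|^n / n = -\log(1-|x|)$. It then remains to compare $-\log(1-|x|)$ with $\frac{-\log(1-a)}{a}\,|x|$, and for this I would introduce $g(t) = \frac{-\log(1-t)}{t}$ on $(0,1)$ and observe that $g(t) = \sum_{n \ge 1} t^{n-1}/n$ is a power series with nonnegative coefficients (all positive from $n \ge 2$), hence strictly increasing. Since $0 < |x| < a$ this yields $g(|x|) < g(a)$, that is $-\log(1-|x|) < \frac{-\log(1-a)}{a}\,|x|$, and chaining with the previous bound gives the claim.

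For the second inequality I would introduce $\phi(x) = \frac{e^x - 1}{x}$ (with $\phi(0) = 1$), noting that $e^x - 1$ and $x$ always share the same sign so $\phi(x) > 0$, and therefore $\frac{|x|}{|e^x - 1|} = \frac{1}{\phi(x)}$. The key point is that $\phi$ is strictly increasing on $\mathbb{R}$: computing $\phi'(x) = \frac{e^x(x-1)+1}{x^2}$ and setting $N(x) = e^x(x-1)+1$, one checks $N(0) = 0$ and $N'(x) = x e^x$, which is negative for $x < 0$ and positive for $x > 0$; hence $N$ attains a global minimum of $0$ at the origin and $\phi' \ge 0$ throughout. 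Given $x \in (-a,a)$ with $x \neq 0$, strict monotonicity gives $\phi(x) > \phi(-a) = \frac{1 - e^{-a}}{a}$, whence $\frac{|x|}{|e^x - 1|} = \frac{1}{\phi(x)} < \frac{a}{1 - e^{-a}}$, which rearranges to the stated bound.

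The computations here are elementary, so I do not anticipate a serious obstacle; the only points requiring care are the two monotonicity statements. For $g$ this is immediate from the nonnegativity of the series coefficients, whereas for $\phi$ it reduces to the sign analysis of $N(x) = e^x(x-1)+1$, which I expect to be the most delicate (though still routine) step of the argument.
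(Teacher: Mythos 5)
Your proof is correct, but there is nothing in the paper to compare it against: the authors state this lemma as an imported result, citing de Weger's tract without reproducing any argument. Your blind attempt therefore supplies a complete, self-contained verification. Both halves check out: for the first inequality, the termwise bound $|\log(1+x)| \le -\log(1-|x|)$ combined with the strict monotonicity of $g(t) = -\log(1-t)/t$ (clear from the positive coefficients in $g(t)=\sum_{n\ge 1} t^{n-1}/n$) gives the claim; for the second, your derivative computation is right --- with $N(x)=e^{x}(x-1)+1$ one has $N(0)=0$ and $N'(x)=xe^{x}$, so $N>0$ off the origin and $(e^{x}-1)/x$ is strictly increasing, whence $(e^{x}-1)/x > (1-e^{-a})/a$ for $x>-a$, which rearranges to the stated bound. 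Your caveat excluding $x=0$ is also well taken: as literally stated, both strict inequalities degenerate to $0<0$ at $x=0$, so the lemma is only true (and only ever applied) for $x\neq 0$; this is a genuine, if harmless, imprecision in the paper's statement. One cosmetic remark: the paper reserves $\phi$ for the golden ratio $(1+\sqrt{5})/2$, so if this argument were spliced into the text your auxiliary function $(e^{x}-1)/x$ should be renamed to avoid a notational clash.
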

\begin{lemma}\label{lem 2.6}
(\cite{Sanchez}, Lemma 7)
If $m \geq 1$, $S \geq (4m^{2})^{m}$ and $\frac{x}{(\log x)^{m}} < S$, then $x < 2^{m} S (\log S)^{m}$.
\end{lemma}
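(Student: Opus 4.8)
The plan is to argue by contradiction, exploiting the fact that the one-variable function $g(t) = t/(\log t)^m$ is eventually increasing. First I would record the elementary fact that, for $t>1$, $g'(t) = (\log t)^{-m-1}(\log t - m)$, so that $g$ is strictly increasing on $(e^m,\infty)$. The hypothesis can be rewritten as $g(x) < S$, and the desired conclusion as $x < x_0$, where $x_0 := 2^m S (\log S)^m$. Since the conclusion is immediate when $x < x_0$, I would assume for contradiction that $x \ge x_0$ and derive a contradiction with $g(x) < S$.

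Next I would check that $x_0$ already lies in the increasing range of $g$. Because $S \ge (4m^2)^m \ge 4^m$ and hence $\log S \ge 1$, one gets $x_0 = 2^m S(\log S)^m \ge 8^m > e^m$. Thus $g$ is increasing on $[x_0,\infty)$, and $x \ge x_0$ forces $g(x) \ge g(x_0)$. The entire argument therefore reduces to establishing $g(x_0) \ge S$, for this yields $S > g(x) \ge g(x_0) \ge S$, the required contradiction.

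The heart of the matter is the scalar inequality $g(x_0) \ge S$. Unwinding the definition, this is equivalent to $2\log S \ge \log x_0 = m\log 2 + \log S + m\log\log S$, i.e. to
\[
\log S \ge m\,\log(2\log S).
\]
Here is exactly where the hypothesis $S \ge (4m^2)^m$ enters: it gives $\log S \ge 2m\log(2m)$. Writing $t := \log S$ and $h(t) := t - m\log(2t)$, I would observe $h'(t) = 1 - m/t > 0$ for $t > m$, and since $2\log(2m) > 1$ the lower bound $t \ge 2m\log(2m)$ sits inside this increasing range. It then suffices to verify $h$ at the endpoint $t = 2m\log(2m)$, where a short computation collapses the expression to $m\log\!\big(m/\log(2m)\big)$, which is nonnegative precisely when $m \ge \log(2m)$.

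The main obstacle, and the only delicate point, is this final scalar inequality $m \ge \log(2m)$: it is the tightest link in the chain and is what pins down the specific threshold $(4m^2)^m$ in the hypothesis. I would settle it by noting that $p(m) := m - \log(2m)$ satisfies $p(1) = 1 - \log 2 > 0$ and $p'(m) = 1 - 1/m \ge 0$ for all $m \ge 1$, so $p$ is nondecreasing and remains positive. Everything else is bookkeeping: rewriting the hypothesis through $g$, confirming $x_0 > e^m$, and tracking the equivalences in the logarithmic reduction.
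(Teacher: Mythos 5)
Your proof is correct: the monotonicity of $t/(\log t)^m$ on $(e^m,\infty)$, the reduction of $g(x_0)\ge S$ to $\log S\ge m\log(2\log S)$, and the endpoint evaluation $h\bigl(2m\log(2m)\bigr)=m\log\bigl(m/\log(2m)\bigr)\ge 0$ all check out, and the hypothesis $S\ge(4m^2)^m$ enters exactly where it should. The paper itself states this lemma without proof, citing S\'anchez and Luca, and your argument is essentially the standard one used in that source, so there is nothing to flag.
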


\subsection{Properties of $k$-Pell-Lucas sequence}
The characteristic polynomial of the $k$-Pell-Lucas sequence is  
 \[
 \Phi_{k}(x) = x^{k} - 2 x^{k-1}  - x^{k-2} - \dots - x -1. 
 \]
The above polynomial is irreducible over $\mathbb{Q} [x]$ and it has one positive real root that is $\gamma := \gamma(k)$ which is located between $ \phi^ 2(1 - \phi ^{-k})$ and $\phi^2$, lies outside the unit circle (see \cite{Siar}). The other roots are firmly contained within the unit circle. To simplify the notation, we will omit the dependence on $k$ of $\gamma$ whenever no confusion may arise.

The Binet  formula for $Q_{n}^{(k)}$  that found in \cite{Siar} is
\begin{equation}\label{eq 2.8}
Q_{n}^{(k)} = \displaystyle\sum_{i=1}^{k} (2 \gamma_{i} - 2) g_{k} (\gamma_{i})\gamma_{i}^{n} = \sum_{i=1}^{k} \frac{2(\gamma_{i}-1)^{2}}{(k+1)\gamma_{i}^2 - 3k\gamma_{i} + k -1} \gamma_{i}^{n},
\end{equation}
where  $\gamma_{i}$ represents the roots of the characteristic polynomial $\Phi_{k}(x)$ and the function $g_{k}$ is  given by
\begin{equation}\label{eq 2.9}
	g_{k}(z) := \frac{z-1}{(k+1)z^2 - 3kz + k -1}, 
\end{equation}
for an integer $k$ $\geq$ 2. 

Additionally it is  also shown in \cite[Lemma~10]{Siar} that the roots located inside the unit circle have a very minimal influence on the formula \eqref{eq 2.9}, which is given by the approximation 
\begin{equation}\label{eq 2.13}
	\left| Q_{n}^{(k)} -  (2\gamma-2)g_{k} (\gamma) \gamma^{n} \right| < 2 \quad \text{holds  for   all} \quad n \geq 2 - k.
\end{equation}
Furthermore, it was shown by \c{S}iar and Keskin in \cite[Lemma~10]{Siar} that the inequality
\begin{equation}\label{eq 2.14}
	\gamma^{n-1} \leq Q_{n} ^ {(k)} \leq 2 \gamma^{n} \text{ holds for all } n \geq 1 ~{\rm and}~ k \geq 2.
\end{equation}
\begin{lemma}\label{lem 2.7}{\rm{(\cite{Bravo}, Lemma 3.2)}}.
Let $k \geq 2 $  be an integer. Then we have
\[
0.276 < g_{k}(\gamma) < 0.5 \  and \  \left| g_{k}(\gamma_{i}) \right| < 1 \quad for \quad 2 \leq i \leq k.
\]	
\end{lemma}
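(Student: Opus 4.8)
The plan is to first replace the $k$-dependent denominator of $g_k$ by a much simpler expression coming from the minimal polynomial, and then treat the dominant root and the inner roots separately. The key preliminary step is to multiply $\Phi_{k}(x)$ by $(x-1)$, which telescopes the geometric tail: one checks directly that $(x-1)\Phi_{k}(x) = x^{k+1} - 3x^{k} + x^{k-1} + 1$. Since $\Phi_{k}(1) = -k \neq 0$, every root $\gamma_i$ of $\Phi_{k}$ is different from $1$ and satisfies $\gamma_i^{k+1} - 3\gamma_i^{k} + \gamma_i^{k-1} + 1 = 0$; dividing by $\gamma_i^{k-1}$ gives the identity $\gamma_i^{2} - 3\gamma_i + 1 = -\gamma_i^{1-k}$. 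Writing the denominator of $g_k$ as $(k+1)\gamma_i^{2} - 3k\gamma_i + k - 1 = k(\gamma_i^{2} - 3\gamma_i + 1) + (\gamma_i^{2} - 1)$ and substituting the identity yields the clean form
\[
g_{k}(\gamma_i) = \frac{\gamma_i - 1}{\gamma_i^{2} - 1 - k\gamma_i^{1-k}}.
\]
This is the crucial reduction, since it removes all terms that are explicitly linear in $k$.

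For the dominant root I would first pin down $\gamma \in (2,\phi^{2})$: the upper bound $\gamma < \phi^{2}$ is quoted from \cite{Siar}, while $\gamma > 2$ follows because $\Phi_{k}(1) = -k < 0$ and $\Phi_{k}(2) = -(2^{k-1} - 1) < 0$ for $k \geq 2$, so the unique root exceeding $1$ lies beyond $2$. Using the form above, the inequality $g_{k}(\gamma) < \tfrac12$ is equivalent to $(\gamma - 1)^{2} > k\gamma^{1-k}$ (the denominator being automatically positive, as $\gamma^{2}-1 > (\gamma-1)^{2}$); since $\gamma > 2$ one has $(\gamma - 1)^{2} > 1$, whereas $k\gamma^{1-k} < k\,2^{1-k} \leq 1$ for all $k \geq 2$, which settles the upper bound. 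For the lower bound, the positivity of $k\gamma^{1-k}$ gives $g_{k}(\gamma) > \tfrac{\gamma - 1}{\gamma^{2} - 1} = \tfrac{1}{\gamma + 1}$, and $\gamma < \phi^{2}$ then forces $g_{k}(\gamma) > \tfrac{1}{\phi^{2} + 1} = \tfrac{5 - \sqrt5}{10} = 0.27639\ldots > 0.276$.

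The inner roots are where the only real care is needed, since a priori the denominator could be small for the $k-1$ roots with $|\gamma_i| < 1$. Here the same identity does the work: because $|\gamma_i| < 1$ we have $|\gamma_i|^{1-k} > 1$, so the triangle inequality gives $|\gamma_i^{2} - 1 - k\gamma_i^{1-k}| \geq k|\gamma_i|^{1-k} - |\gamma_i^{2} - 1| > k - 2$, while the numerator satisfies $|\gamma_i - 1| \leq |\gamma_i| + 1 < 2$. Hence $|g_{k}(\gamma_i)| < \tfrac{2}{k-2} \leq 1$ for every $k \geq 4$. The two remaining cases $k = 2$ and $k = 3$ involve only the quadratic $x^{2} - 2x - 1$ and the cubic $x^{3} - 2x^{2} - x - 1$, whose inner roots and corresponding values of $g_{k}$ I would check directly to be less than $1$ in absolute value. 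I expect this inner-root estimate to be the main obstacle: everything rests on the factor $k|\gamma_i|^{1-k} \geq k$ produced by the identity $\gamma_i^{2} - 3\gamma_i + 1 = -\gamma_i^{1-k}$, which is exactly what keeps the denominator away from zero, and without rewriting $g_k$ in this way a direct lower bound on $(k+1)\gamma_i^{2} - 3k\gamma_i + k - 1$ for roots lying close to the unit circle would be considerably more delicate.
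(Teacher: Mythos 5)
The paper never proves this lemma --- it is imported verbatim from Bravo--Herrera--Luca \cite{Bravo} as a black box --- so there is no in-paper argument to compare against; judged on its own merits, your proof is correct. The pivotal telescoping identity checks out: writing $\Phi_k(x) = x^k - x^{k-1} - (x^k-1)/(x-1)$ gives $(x-1)\Phi_k(x) = x^{k+1} - 3x^k + x^{k-1} + 1$, every root is nonzero (as $\Phi_k(0) = -1$) and different from $1$ (as $\Phi_k(1) = -k$), so $\gamma_i^2 - 3\gamma_i + 1 = -\gamma_i^{1-k}$ holds, and the decomposition $(k+1)x^2 - 3kx + k - 1 = k(x^2 - 3x + 1) + (x^2 - 1)$ then yields your clean form of $g_k$; as a consistency check, it reproduces $g_k(\phi^2) = 1/(\phi+2)$, the value the paper itself uses in the proof of Lemma \ref{lem 2.9}. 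The dominant-root estimates are sound: $\Phi_k(2) = -(2^{k-1}-1) < 0$ together with the fact that $\gamma$ is the only root outside the unit circle forces $\gamma > 2$; the equivalence $g_k(\gamma) < \tfrac12 \iff (\gamma-1)^2 > k\gamma^{1-k}$ is correct once the denominator is known positive, and your chain $k\gamma^{1-k} < k\,2^{1-k} \le 1 < (\gamma-1)^2$ is strict even at $k=2$ (where $k\,2^{1-k} = 1$) because $\gamma > 2$ strictly, which simultaneously delivers the positivity of the denominator via $\gamma^2 - 1 > (\gamma-1)^2$; and $1/(\phi^2+1) = (5-\sqrt5)/10 = 0.2763\ldots$ does exceed $0.276$. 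For the inner roots, $|\gamma_i|^{1-k} > 1$ and the reverse triangle inequality give $|\gamma_i^2 - 1 - k\gamma_i^{1-k}| > k-2$ with both the numerator bound $|\gamma_i - 1| < 2$ and the denominator bound strict, so $|g_k(\gamma_i)| < 2/(k-2)$ settles every $k \ge 4$, including the borderline case $k = 4$; the residual cases $k \in \{2,3\}$ are genuinely finite verifications (for $k=2$ the inner root $1-\sqrt2$ gives $g_2 = -\sqrt2/4$, and for $k=3$ the conjugate pair of modulus about $0.63$ gives $|g_3| \approx 0.19$), so leaving them as direct checks is unobjectionable. Your closing diagnosis is also apt: without the rewriting $g_k(\gamma_i) = (\gamma_i-1)/(\gamma_i^2 - 1 - k\gamma_i^{1-k})$, bounding $(k+1)\gamma_i^2 - 3k\gamma_i + k-1$ away from zero for roots near the unit circle would be the hard point, and this factorization device is precisely the kind of tool used in the source literature on $k$-Pell sequences (\c{S}iar--Keskin, Bravo et al.), so your argument is in the spirit of the cited original even though the present paper reproduces none of it.
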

\noindent
Furthermore, Bravo et al. \cite{Bravo1} showed that the logarithmic height of $g_{k} (\gamma)$ satisfies
\begin{equation}\label{eq 2.15}
    h(g_{k}(\gamma)) < 4k\log \phi + k \log(k+1) \quad \text{for all} \quad k \geq 2.
\end{equation}
\begin{lemma}\label{lem 2.8}
Let $\gamma$ be the dominant root of the characteristic polynomial $\Phi_{k}(x)$  and consider the function $g_{k}(x)$ defined in \eqref{eq 2.9}. If $k \geq 50$ and $n > 1$ are integers satisfying $n < \phi^{k/2}$, then the following inequalities holds
\\
(i) \cite[Equation~30]{Siar}
\[
\left| (2 \gamma - 2)\gamma^{n} - 2 \phi^{2n+1} \right| < \frac{4 \phi^{2n}}{\phi^{k/2}},
\]
(ii)\cite[Lemma~13]{Siar}
\[
|g_{k}(\gamma) -  g_{k}(\phi^{2})| < \frac{4k}{\phi^{k}}.
\]
\end{lemma}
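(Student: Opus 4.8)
The plan is to derive both inequalities from the single observation that $\gamma$ is exponentially close to $\phi^2$: since $\phi^2(1-\phi^{-k}) < \gamma < \phi^2$, we have $|\gamma - \phi^2| < \phi^{2-k}$. Each left-hand side is the value of a smooth function at $\gamma$ minus its value at $\phi^2$, so I would apply the mean value theorem on the short interval $(\gamma,\phi^2)$ and bound the relevant derivative there. The arithmetic of $\phi$ (namely $\phi^2=\phi+1$, $\phi^2-1=\phi$, $\phi^4=3\phi+2$) makes the base-point values clean, which is exactly what produces the displayed right-hand sides.

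For (i), set $F(x)=(2x-2)x^n=2x^{n+1}-2x^n$, so the left-hand side is $|F(\gamma)-F(\phi^2)|$. A direct computation gives $F(\phi^2)=2\phi^{2n}(\phi^2-1)=2\phi^{2n+1}$, precisely the subtracted term. Writing $F(\gamma)-F(\phi^2)=F'(\xi)(\gamma-\phi^2)$ for some $\xi\in(\gamma,\phi^2)$ and using $F'(x)=2x^{n-1}((n+1)x-n)$ together with $\xi<\phi^2$, I would bound $|F'(\xi)|<2\phi^{2n-1}(n+\phi)$. Combining this with $|\gamma-\phi^2|<\phi^{2-k}$ gives $|F(\gamma)-F(\phi^2)|<2(n+\phi)\phi^{2n+1-k}$, and it then remains to verify $2(n+\phi)\phi^{2n+1-k}<4\phi^{2n-k/2}$. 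This reduces to $(n+\phi)\phi^{1-k/2}<2$, which is exactly where the hypothesis $n<\phi^{k/2}$ enters: it yields $n\phi^{1-k/2}<\phi$, while $\phi^{2-k/2}$ is negligible for $k\geq 50$, so the sum stays below $\phi+o(1)<2$.

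For (ii), write $g_k=N/D$ with $N(z)=z-1$ and $D(z)=(k+1)z^2-3kz+k-1$. The key computation is $D(\phi^2)=3\phi+1$: although the coefficients of $D$ are of order $k$, the $k$-dependent parts cancel at $z=\phi^2$ (using $\phi^4=3\phi+2$), leaving a constant; this also reproduces $g_k(\phi^2)=\phi/(3\phi+1)\approx 0.276$, consistent with Lemma~\ref{lem 2.7}. Applying the mean value theorem again, $g_k(\gamma)-g_k(\phi^2)=g_k'(\xi)(\gamma-\phi^2)$, where the numerator of $g_k'(z)$ simplifies to $-(k+1)z^2+2(k+1)z-(2k+1)$, of size $O(k)$ near $\phi^2$, while $D(\xi)^2$ stays bounded below by a constant. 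This gives $|g_k'(\xi)|=O(k)$, and multiplying by $|\gamma-\phi^2|<\phi^{2-k}$ produces a bound of the form $Ck\phi^{-k}$ with $C<4$, as required. Note that (ii) uses only $k\geq 50$ and not the bound on $n$.

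The main obstacle is the denominator in (ii): since $D(z)$ has coefficients of order $k$, one must rule out $|D(\xi)|$ becoming small for $\xi\in(\gamma,\phi^2)$, which would blow up $g_k'(\xi)$. This is resolved precisely by the exponential closeness $|\xi-\phi^2|<\phi^{2-k}$: writing $\xi=\phi^2-\delta$ with $0<\delta<\phi^{2-k}$ and expanding, the $\delta$-dependent terms of $D(\xi)$ are of size $O(k\phi^{-k})$, hence negligible for $k\geq 50$, so $D(\xi)=(3\phi+1)+O(k\phi^{-k})$ remains bounded away from $0$. The only other point requiring care is the interplay between the factor $\phi^{-k}$ lost from $|\gamma-\phi^2|$ and the milder $\phi^{-k/2}$ demanded by (i); the hypothesis $n<\phi^{k/2}$ supplies exactly the missing $\phi^{k/2}$ budget needed to absorb the $n$-growth of $F'$.
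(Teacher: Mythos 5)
Your proof is correct, but it necessarily takes a different route from the paper, because the paper gives no proof of Lemma \ref{lem 2.8} at all: both inequalities are quoted verbatim from \c{S}iar and Keskin \cite{Siar} (Equation~30 and Lemma~13 there). Your mean-value-theorem derivation checks out in every detail. For (i): $F(\phi^2)=2\phi^{2n}(\phi^2-1)=2\phi^{2n+1}$ since $\phi^2-1=\phi$; moreover $(n+1)\phi^2-n=n\phi+\phi^2=\phi(n+\phi)$, so indeed $|F'(\xi)|<2\phi^{2n-1}(n+\phi)$ for $\xi\in(\gamma,\phi^2)$ (note $F'$ is increasing there), and with $|\gamma-\phi^2|<\phi^{2-k}$ the target inequality reduces, exactly as you say, to $(n+\phi)\phi^{1-k/2}<2$, which holds because $n<\phi^{k/2}$ gives $n\phi^{1-k/2}<\phi$ and $\phi^{2-k/2}<10^{-4}$ for $k\geq 50$. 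For (ii): $D(\phi^2)=(k+1)(3\phi+2)-3k(\phi+1)+k-1=3\phi+1$ (the $k$-dependent terms cancel, and your value $g_k(\phi^2)=\phi/(3\phi+1)=1/(\phi+2)$ agrees with the one used in the proof of Lemma \ref{lem 2.9}); the numerator of $g_k'(z)$ is indeed $-(k+1)z^2+2(k+1)z-(2k+1)$, whose magnitude at $z=\phi^2$ is $(k+1)\phi+2k+1=k(\phi+2)+\phi^2$; and your stability estimate $D(\xi)=3\phi+1+O(k\phi^{-k})$ correctly disposes of the only real danger, the denominator degenerating, so that $|g_k'(\xi)|<0.11k+0.1$ and $|g_k(\gamma)-g_k(\phi^2)|<0.3\,k\phi^{-k}$, comfortably inside the claimed $4k/\phi^k$. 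Compared with the paper's citation-only treatment (and with the expansion-style arguments typical of this literature, which write $\gamma=\phi^2(1-\delta)$ and estimate term by term), your approach buys a self-contained and arguably cleaner proof: a single derivative bound replaces several triangle-inequality expansions, it makes explicit that the hypothesis $n<\phi^{k/2}$ enters only in part (i) while part (ii) needs only $k$ large, and it shows the constants $4$ in both bounds are quite generous.
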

\begin{lemma}\label{lem 2.9}
Let $k \geq 50$ and suppose that $n < \phi^{k/2}$, then
\begin{equation}\label{eq 2.16}
   (2 \gamma - 2)g_{k} (\gamma) \gamma^{n} = \frac{2 \phi^{2n+1}}{\phi + 2}(1 + \xi), \quad \text{where} \quad  |\xi| < \frac{1.25}{\phi^{k/2}}.
\end{equation}
\end{lemma}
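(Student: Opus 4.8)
The plan is to reduce the whole statement to a single $k$-independent algebraic identity, namely $g_k(\phi^2) = \frac{1}{\phi+2}$, and then to control the two independent approximations supplied by Lemma~\ref{lem 2.8}. First I would evaluate $g_k$ at $\phi^2$ using $\phi^2 = \phi + 1$ (so that $\phi^4 = 3\phi + 2$). Substituting into \eqref{eq 2.9}, the numerator becomes $\phi^2 - 1 = \phi$, while in the denominator $(k+1)\phi^4 - 3k\phi^2 + (k-1)$ every term containing $k$ cancels, collapsing it to $3\phi + 1$. Since $\phi(\phi+2) = \phi^2 + 2\phi = 3\phi + 1$, this yields the $k$-free value $g_k(\phi^2) = \frac{\phi}{3\phi+1} = \frac{1}{\phi+2}$. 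In particular the target main term is exactly $2\phi^{2n+1}\,g_k(\phi^2) = \frac{2\phi^{2n+1}}{\phi+2}$, which is precisely the expression we must match.

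Next I would abbreviate $A = (2\gamma-2)\gamma^n$, $B = 2\phi^{2n+1}$, $C = g_k(\gamma)$ and $D = g_k(\phi^2) = \frac{1}{\phi+2}$, so the quantity of interest is $AC$ and the claimed main term is $BD = \frac{2\phi^{2n+1}}{\phi+2}$. The error is then handled by the splitting
\[
AC - BD = A(C-D) + D(A-B),
\]
estimating $|A-B|$ by Lemma~\ref{lem 2.8}(i) and $|C-D|$ by Lemma~\ref{lem 2.8}(ii), both applicable for $k \geq 50$ and $1 < n < \phi^{k/2}$. For the factor $|A|$ I would use the elementary bound $|A| < 2\phi^{2n+1}$, which follows since $1 < \gamma < \phi^2$ forces $\gamma^n < \phi^{2n}$ and $2\gamma - 2 < 2\phi^2 - 2 = 2\phi$.

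Assembling these and dividing by $BD$, with $\xi = (AC-BD)/BD$, the cross term $D(A-B)$ produces the leading contribution $\frac{|D|\,|A-B|}{BD} < \frac{2}{\phi}\cdot\frac{1}{\phi^{k/2}}$, while $A(C-D)$ contributes at most $(\phi+2)\cdot\frac{4k}{\phi^k}$. Hence
\[
|\xi| < \frac{1}{\phi^{k/2}}\left(\frac{2}{\phi} + (\phi+2)\,\frac{4k}{\phi^{k/2}}\right),
\]
and it remains only to verify $\frac{2}{\phi} + (\phi+2)\frac{4k}{\phi^{k/2}} < 1.25$ for every $k \geq 50$.

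Here lies the one genuinely delicate point: since $\frac{2}{\phi} = 1.2360\ldots$ is already very close to $1.25$, the secondary term must be shown to stay below the slack $0.0139\ldots$, so the clean $1.25/\phi^{k/2}$ comes entirely from the tightness of this estimate rather than from any crude majorisation. I would check it at the base case $k = 50$, where $(\phi+2)\frac{4k}{\phi^{k/2}} \approx 0.004 < 0.0139$, and then observe that $\frac{4k}{\phi^{k/2}}$ is decreasing for $k \geq 50$, since its logarithmic derivative is governed by $1 - \tfrac{k}{2}\log\phi$, which is negative as soon as $k > 2/\log\phi \approx 4.16$. This gives $|\xi| < 1.25/\phi^{k/2}$ and completes the argument; apart from this tightness verification, everything else is routine bookkeeping with the triangle inequality.
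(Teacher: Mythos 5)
Your proof is correct and takes essentially the same route as the paper: both arguments rest on Lemma \ref{lem 2.8}(i)--(ii) together with the identity $g_{k}(\phi^{2}) = 1/(\phi+2)$, and both reduce the bound to the dominant contribution $2/\phi \approx 1.236$ plus a secondary term that must be squeezed under the remaining slack for $k \geq 50$. Your splitting $AC - BD = A(C-D) + D(A-B)$ (with $|A| < B = 2\phi^{2n+1}$) is merely a regrouping of the paper's three-term expansion $(B+\delta)(D+\eta) - BD = B\eta + D\delta + \delta\eta$, so nothing essential differs.
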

\begin{proof}
By virtue of Lemma \ref{lem 2.8}, we have
\begin{equation}\label{eq 2.21}
(2 \gamma - 2)\gamma^{n} = 2 \phi^{2n+1} + \delta \quad \text{and} \quad g_{k}(\gamma) = g_{k}(\phi^{2}) + \eta, 
\end{equation}
where
\begin{equation}\label{eq 2.22}
 |\delta| = \frac{4 \phi^{2n}}{\phi^{k/2}} \quad \text{and} \quad |\eta| = \frac{4k}{\phi^{k}}.   
\end{equation}
Since $g_{k}(\phi^{2}) = \frac{1}{\phi + 2}$, we can write
\begin{align*}
(2 \gamma - 2)\gamma^{n}g_{k}(\gamma) & = \left( 2 \phi^{2n+1} + \delta \right)  \left( g_{k}(\phi^{2}) + \eta\right) \\ 
&  = \frac{2 \phi^{2n+1}}{\phi+2}(1 + \xi),
\end{align*}
where 
\[
\xi = \frac{\delta}{2 \phi^{2n+1}} + (\phi +2)\eta + \frac{(\phi +2)\eta \delta}{2 \phi^{2n+1}}.
\]
Using \eqref{eq 2.22}, it follows from the  above equation that
\[
|\xi|  < \frac{2/\phi}{\phi^{k/2}} + \frac{4k(\phi+2)}{\phi^{k}} + \frac{8k(\phi+2)}{\phi \cdot \phi^{3k/2}} < \frac{1.25}{\phi^{k/2}},
\]
where we have used the facts 
\[
\frac{2/\phi}{\phi^{k/2}} < \frac{1.24}{\phi^{k/2}}, \quad  \frac{4k(\phi+2)}{\phi^{k}} < \frac{0.005}{\phi^{k/2}} \quad \text{and} \quad \frac{(8k/\phi)(\phi+2)}{\phi^{3k/2}} < \frac{0.005}{\phi^{k/2}}, 
\]
for all  $k \geq 50$. This completes the proof of the lemma.
\end{proof}

\section{Proof of Theorem \ref{thm1}}
First, observe that for $1 \leq n \leq k+1$,  we have  $Q_{n}^{(k)} = 2 F_{2n}$ (see $\cite[Lemma~10]{Siar}$ ) where $F_{m}$ is the $m$th Fibonacci number. Therefore \eqref{eq 1.2} becomes 
\begin{equation}\label{eq 3.23}
  2 F_{2n} =  \frac{a(10^{l}-1)}{9} \cdot \frac{b(10^{m}-1)}{9}.
\end{equation}
So by using \cite[Theorem~4]{Erduvan}, we found that the solutions of  \eqref{eq 3.23} satisfy for $n \in \{ 1,2,3,4\}$. From now, we consider $n \geq k+2$ and $k \geq 3$. 

\subsection{An initial relation between $n$ and $m+l$ }
The inequalities \eqref{eq 2.14} and the relations $10^{l - 1} < a(10^{l} - 1)/9 < 10^{l}$ implies that 
\[
10^{m+l-2} < \frac{a(10^{l}-1)}{9} \cdot \frac{b(10^{m}-1)}{9} = Q_{n}^{(k)} < 2 \gamma^{n}
\]
and 
\[
\gamma^{n-1} < Q_{n}^{(k)} = \frac{a(10^{l}-1)}{9} \cdot \frac{b(10^{m}-1)}{9} = 10^{m+l}. 
\]
This shows that 
\[
(n-1) \left( \frac{\log \gamma}{\log 10}  \right) < m+l < \frac{\log 2}{\log 10} + n \left( \frac{\log \gamma}{\log 10} \right) + 2.
\]
Since $\phi^{2}(1-\phi^{-k}) < \gamma(k) < \phi^{2}$ for all $k \geq 3$, we obtain
\begin{equation}\label{eq 3.24}
0.3n - 0.3 < m+l < 0.42n + 2.31.    
\end{equation}
\subsection{Upper bounds for $n$ in terms of $k$}
Now, we can rearrange the equation \eqref{eq 1.2} as 
\begin{equation}\label{eq 3.25}
    \frac{ab10^{m+l}}{81} = Q_{n}^{(k)} + \frac{ab}{81}(10^{l} + 10^{m} -1).
\end{equation}
Using \eqref{eq 2.13} and taking absolute values on both sides, we obtain 
\begin{align*}
\left |\frac{ab10^{m+l}}{81} - (2\gamma - 2)g_{k}(\gamma) \gamma^{n}  \right| &= \left |\frac{ab}{81} (10^{l} + 10^{m} -1) + Q_{n}^{(k)}  - (2\gamma - 2)g_{k}(\gamma) \gamma^{n}  \right|  \\ 
& \leq  \frac{ab}{81} (10^{l} + 10^{m} + 1) + 2. 
\end{align*}
Dividing both sides by $\frac{ab10^{m+l}}{81}$, we get 
\begin{equation} \label{eq 3.26}
|\Lambda_{1}| \leq \frac{162}{ab10^{m+l}} + \frac{1}{10^{m}} + \frac{1}{10^{l}} + \frac{1}{10^{m+l}} < \frac{18.3}{10^{l}},
\end{equation}
where 
\begin{equation} \label{eq 3.27}
    \Lambda_{1} := \frac{81(2\gamma - 2)g_{k}(\gamma)}{ab}  \gamma^{n} 10^{-(m+l)} - 1.
\end{equation}
If $\Lambda_{1} = 0$, then we would get 
\[
(2\gamma - 2)g_{k}(\gamma) = \frac{ab}{81} \cdot \gamma^{-n} \cdot 10^{m+l}
\]
which implies that $g_{k}(\gamma)$ is an algebraic integer, which is a contradiction. Hence $\Lambda_{1} \neq 0$.

\noindent
In order to use Theorem \ref{thm2} to $\Lambda_{1}$ given by \eqref{eq 3.27}, we take the parameters:
\[
\eta_{1} := \frac{81(2 \gamma-2)g_{k}(\gamma)}{ab}, \quad \eta_{2} := \gamma , \quad \eta_{3} := 10,
\]
and
\[ a_{1}:= 1, \quad a_{2}:= n, \quad a_{3}:= -(m+l).
\]
Note that $\eta_{1}, \eta_{2}, \eta_{3}$ belongs to the field  $\mathbb{L} := \mathbb{Q}(\gamma)$, so we can assume $d_{\mathbb{L}} = [\mathbb{L}:\mathbb{Q}] \leq k$. Since $h(\eta_{2}) = (\log \gamma) / k < (2 \log \phi)/k$ and $h(\eta_{3}) = \log 10 $, it follows that 
\[
\max\{kh(\eta_{2}),|\log \eta_{2}|,0.16\} = 2 \log \phi := B_{2}
\]
and 
\[
\max\{kh(\eta_{3}),|\log \eta_{3}|,0.16\} = k\log 10 := B_{3}.
\]
Therefore, by \eqref{eq 2.15} and the properties of logarithmic height, it follows that
\begin{align*}
    h(\eta_{1}) & = h\left( \frac{81 (2\gamma - 2) g_{k}(\gamma)}{ab}\right)\\ 
    & \leq h(81) + h(2\gamma - 2) + h(g_{k}(\gamma)) + h(a) + h(b) \\
    &< 4\log 9 + 4 k \log \phi + k \log (k+1) + \log 4 \\
    &< 6.2 k \log k, \text{ for all} \quad k \geq 3.
\end{align*}
Since $0.276 < g_{k}(\gamma) < 0.5$, we have
\[
\eta_{1} = \frac{81 (2\gamma - 2) g_{k}(\gamma)}{ab} < 162 \quad \text{and} \quad \eta_{1}^{-1} = \frac{ab}{81 (2\gamma - 2) g_{k}(\gamma)} < 1.9.
\]
Thus, we obtain
\[
\max\{kh(\eta_{1}),|\log \eta_{1}|,0.16\} = 6.2 k^2 \log k := B_{1}.
\]
In addition, using the fact $0.42n + 2.31 < n$ for $n \geq 5$, we can take $D:= n$. Then by Theorem \ref{thm2}, we have
\begin{equation}\label{eq 3.28}
- \log |\Lambda_{1}| < 1.432 \times 10^{11} k^{2} (1+ \log k) (1 +\log n) (6.2 k^{2} \log k) (2 \log \phi) (k \log 10).    
\end{equation}
From the comparison of lower bound \eqref{eq 3.28} and upper bound \eqref{eq 3.26} of   $|\Lambda_{1}|$ gives us
\[
l \log 10 - \log 18.3 < 1.97 \times 10^{12} k^{5} \log k (1 + \log k)(1+ \log n).
\]
Using the fact $1+\log k < 2 \log k$ for all $k \geq 3$, we obtain that 
\begin{equation}\label{eq 3.29}
 l \log 10 < 3.94 \times 10^{12} k^{5} \log^{2} k (1 + \log n).   
\end{equation}
Now rearranging the equation \eqref{eq 1.2} as
\begin{equation}\label{eq 3.30}
    \frac{b 10^{m}}{9} = \frac{9 Q_{n}^{(k)}}{a(10^{l} - 1)} + \frac{b}{9}.
\end{equation}
This makes it possible for us to write
\begin{align*}
\left| \frac{b 10^{m}}{9} - \frac{9(2\gamma - 2) g_{k}(\gamma)}{a(10^{l} - 1)} \cdot \gamma^{n} \right | & = \left| \frac{9 \left( Q_{n}^{(k)} - (2\gamma - 2) g_{k}(\gamma) \gamma^{n} \right)}{a(10^{l} - 1)} + \frac{b}{9}\right|  \\ 
& \leq \frac{18}{a(10^{l} - 1)} + \frac{b}{9}.
\end{align*}
Dividing both sides by $\frac{b 10^{m}}{9}$, it yields
\begin{equation}\label{eq 3.31}
 \left| \frac{81(2\gamma - 2) g_{k}(\gamma)}{ab(10^{l} - 1)} \cdot \gamma^{n} \cdot 10^{-m} - 1 \right | \leq \frac{18 \cdot 9}{ab(10^{l} - 1)10^{m}} + \frac{1}{10^{m}} < \frac{19}{10^{m}},
\end{equation}
where 
\begin{equation}\label{eq 3.32}
\Lambda_{2}:= \frac{81(2\gamma - 2) g_{k}(\gamma)}{ab(10^{l} - 1)} \cdot \gamma^{n} \cdot 10^{-m} - 1.    
\end{equation}
Suppose that $\Lambda_{2}= 0$, then we get
\[
g_{k}(\gamma) = \frac{10^{m} \gamma^{-n} ab(10^{l}-1)}{81(2\gamma - 2)}.
\]
This implies that $g_{k}(\gamma)$ is an algebraic integer, which is a contradiction. Hence $\Lambda_{2} \neq 0$. We take $s := 3$,
\[
 \eta_{1}:= \frac{81(2\gamma - 2) g_{k}(\gamma)}{ab(10^{l} - 1)} , \quad \eta_{2}:= \gamma, \quad \eta_{3}:= 10,
\]
and
\[ 
a_{1}:= 1, \quad a_{2}:= n, \quad a_{3}:= -m.
\]
Note that  $\mathbb{L} := \mathbb{Q}(\gamma)$ contains $\eta_{1}, \eta_{2}, \eta_{3}$ and has $d_{\mathbb{L}}:= k$. Since $m < n$, we deduce that $D:= \max\{|a_{1}|,|a_{2}|,|a_{3}|\}= n$. Since the logarithmic heights for $\eta_{2}$ and $\eta_{3}$ calculated as before are $h(\eta_{2}) = (\log \gamma) / k < (2 \log \phi)/k$ and $h(\eta_{3}) = \log 10 $. Therefore, we may take $ B_{2} := 2 \log \phi$ and $B_{3} := k \log 10$. Besides, using \eqref{eq 2.14} and \eqref{eq 3.28}, we have 
\begin{align*}
    h(\eta_{1}) &= h\left( \frac{81(2\gamma - 2) g_{k}(\gamma)}{ab(10^{l} - 1)} \right) \\
    & \leq h(81)+ h(g_{k}(\gamma)) + h(2\gamma - 2) + h(a) + h(b) + h(10^{l} - 1) \\
    & < 4 \log 9 + l \log 10 + 3 \log 2 + 4 k \log \phi + k \log (k+1) \\
    & < 3.95 \times 10^{12} k^{5} \log^{2} k (1+ \log n).
\end{align*}
However, given that
\[
\eta_{1} = \frac{81(2\gamma - 2) g_{k}(\gamma)}{ab(10^{l} - 1)} < 18  \quad \text{and} \quad \eta_{1}^{-1} = \frac{ab(10^{l} - 1)}{81(2\gamma - 2) g_{k}(\gamma)} < 1.9 \cdot 10^{l},
\]
so by \eqref{eq 3.29}, it follows that
\[
|\log \eta_{1}|< l \log 10 + \log 1.9 < 3.95 \times 10^{12} k^{5} \log^{2} k (1+ \log n). 
\]
As a result, we can take $B_{1} := 3.95 \times 10^{12} k^{6} \log^{2} k (1+ \log n).$ Consequently, by using Theorem \ref{thm2} with the facts $1+\log k < 2 \log k$ and $1 + \log n < 2 \log n$ for $k \geq 3$ and $n \geq\ 5$ to the inequality \eqref{eq 3.32}, we obtain that
\[
- \log |\Lambda_{2}| < 1.1 \times 10^{25} k^{9} \log^{3} k  \log^{2} n.
\]
Comparing the above inequality with \eqref{eq 3.31}, it yields
\[
m < 4.78 \times 10^{24} k^{9} \log^{3} k \log^{2} n.
\]
Using inequality \eqref{eq 3.24}, the previous inequality yields 
\begin{equation}\label{eq 3.33}
\frac{n}{\log^{2} n} < 3.2 \times 10^{25} k^{9} \log^{3} k.    
\end{equation}
Thus, putting $S := 3.2 \times 10^{25} k^{9} \log^{3} k$ in \eqref{eq 3.33} and using Lemma \ref{lem 2.6}, the above inequality yields 
\begin{align*}
n & < 4(3.2 \times 10^{25} k^{9} \log^{3} k) (\log (3.2 \times 10^{25} k^{9} \log^{3} k))^{2} \\
& < (1.28 \times 10^{26} k^{9} \log^{3} k) (58.72 + 9 \log k + 3 \log (\log k))^{2} \\
& < 5.1 \times 10^{29} k^{9} \log^{5} k,
\end{align*}
where we have used $58.72 + 9 \log k + 3 \log (\log k) < 62.8 \log k $ for $k \geq 3$. \\
The result established in this subsection is summarized in the following lemma.
\begin{lemma}\label{lem3.1}
If $(a, b, k, l, m, n)$ is a solution of equation \eqref{eq 1.2} with $k \geq 3$ and $n \geq k+2$, then the inequalities 
\begin{equation}\label{eq 3.34}
 n < 5.1 \times 10^{29} k^{9} \log^{5} k
\end{equation}
holds.
\end{lemma}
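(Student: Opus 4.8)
The plan is to bound $n$ by applying Matveev's theorem (Theorem \ref{thm2}) to two carefully chosen linear forms in logarithms, and then to invert the resulting self-referential inequality using Lemma \ref{lem 2.6}. The starting point is the crude two-sided estimate \eqref{eq 3.24} relating $n$ to $m+l$, obtained from the size bounds \eqref{eq 2.14} together with $\phi^{2}(1-\phi^{-k}) < \gamma < \phi^{2}$; this lets me pass freely between bounds on $n$ and bounds on $m+l$, which is essential since the linear forms will naturally control $l$ and $m$ rather than $n$ directly.

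First I would rearrange \eqref{eq 1.2} so that the dominant term $(2\gamma-2)g_{k}(\gamma)\gamma^{n}$ is isolated against the largest power of ten, namely $10^{m+l}$. Using the approximation \eqref{eq 2.13}, the error is controlled by a constant plus the subdominant powers of $10$, and after dividing by $ab\,10^{m+l}/81$ one arrives at a form $\Lambda_{1}$ of the shape $\eta_{1}^{a_{1}}\eta_{2}^{a_{2}}\eta_{3}^{a_{3}}-1$ with $\eta_{1} = 81(2\gamma-2)g_{k}(\gamma)/(ab)$, $\eta_{2}=\gamma$, $\eta_{3}=10$, satisfying $|\Lambda_{1}| < 18.3/10^{l}$. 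Its non-vanishing follows because $\Lambda_{1}=0$ would force $g_{k}(\gamma)$ to be an algebraic integer. The critical computation is the logarithmic height of $\eta_{1}$: invoking the bound \eqref{eq 2.15} for $h(g_{k}(\gamma))$ together with additivity of heights gives $h(\eta_{1}) < 6.2\,k\log k$. Feeding this into Matveev and comparing with the upper bound for $|\Lambda_{1}|$ yields the bound \eqref{eq 3.29} on $l\log 10$.

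Next I would build a second linear form $\Lambda_{2}$ by rearranging \eqref{eq 1.2} as in \eqref{eq 3.30} to isolate $b\,10^{m}/9$, producing $\eta_{1} = 81(2\gamma-2)g_{k}(\gamma)/\bigl(ab(10^{l}-1)\bigr)$, $\eta_{2}=\gamma$, $\eta_{3}=10$ with $|\Lambda_{2}| < 19/10^{m}$. The height of this new $\eta_{1}$ now also absorbs a contribution $h(10^{l}-1)\le l\log 10$, so the bound \eqref{eq 3.29} on $l$ is precisely what is needed to control it; this bootstrapping is the delicate part of the argument, since the output of the first application must dominate the new height term in the second. A further application of Matveev, comparison with $|\Lambda_{2}| < 19/10^{m}$, and the simplifications $1+\log k < 2\log k$, $1+\log n < 2\log n$ then deliver $m < 4.78\times 10^{24}\,k^{9}\log^{3}k\,\log^{2}n$.

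Finally I would convert the bound on $m$ into a bound on $n$ via the left inequality $0.3n-0.3 < m+l$ of \eqref{eq 3.24} together with the bounds on $l$ and $m$, giving the self-referential inequality \eqref{eq 3.33}, namely $n/\log^{2}n < 3.2\times 10^{25}\,k^{9}\log^{3}k$. Setting $S := 3.2\times 10^{25}\,k^{9}\log^{3}k$ and taking the exponent equal to $2$ in Lemma \ref{lem 2.6}, and then bounding the double logarithm by $58.72 + 9\log k + 3\log\log k < 62.8\log k$ for $k\ge 3$, produces the claimed estimate $n < 5.1\times 10^{29}\,k^{9}\log^{5}k$. The principal obstacle throughout is keeping the height estimates sharp enough that the two Matveev applications compose without the constants degenerating, and in particular verifying that the $l$-bound from the first step genuinely dominates the $h(10^{l}-1)$ term appearing in the second.
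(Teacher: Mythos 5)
Your proposal is correct and follows essentially the same route as the paper: the same two linear forms $\Lambda_{1}$ and $\Lambda_{2}$ with identical choices of $\eta_{1},\eta_{2},\eta_{3}$, the same bootstrapping of the bound \eqref{eq 3.29} on $l$ into the height of the second $\eta_{1}$, and the same final inversion of \eqref{eq 3.33} via Lemma \ref{lem 2.6} with exponent $2$. Even your intermediate constants ($6.2\,k\log k$, $4.78\times 10^{24}$, $3.2\times 10^{25}$, $62.8\log k$) coincide with those in the paper's argument.
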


\subsection{The case of small $k$}
In this subsection, we consider $k \in [3, 640]$. Let us define
 \begin{equation}\label{eq 3.35}
\Gamma_{1} := \log(\Lambda_{1} + 1) = n \log \gamma -(m+l) \log 10 + \log \left(81(2\gamma - 2) g_{k}(\gamma) / ab \right).     
\end{equation}
Assume that $l \geq 2$, so it comes from \eqref{eq 3.26} that $|\Lambda_{1}| < 0.183$. Now by Lemma \ref{lem 2.5}, we have 
\begin{equation}\label{eq 3.36}
 |\Gamma_{1}| < \frac{- \log 0.817}{0.183}\cdot |\Lambda_{1}|  < 20.22 \cdot \exp (-2.3 \cdot l). 
\end{equation}
In order to apply Lemma \ref{lem 2.4} to $\Gamma_{1}$, set
\[
c := 20.22, \quad \rho := 2.3 , \quad \psi := \frac{\log \left(81 (2\gamma-2)g_{k}(\gamma)/ ab \right)}{\log 10},
\]
and
\[ \vartheta:= \frac{\log \gamma}{\log 10}, \quad \vartheta_{1}:=  \log \gamma, \quad \vartheta_{2}:= - \log 10, \quad \delta:= \log \left(81 (2\gamma-2)g_{k}(\gamma)/ ab \right).
\]
From Lemma \ref{lem3.1}, we take $X_{0} = \left\lfloor 5.1 \times 10^{29} k^{9} \log^{5} k \right \rfloor$ which is an upper bound of $\max \{ n, m+l \}$. For any $k \in [3, 640]$, we find a good approximation of $\gamma$ and a convergent $p_{t}/q_{t}$ of the continued fraction of $\vartheta$ such that $q_{t} > X_{0}$. After that, we apply Lemma \ref{lem 2.4} to inequality \eqref{eq 3.35}. Using \textit{Mathematica} we found  that if $k \in [3,640]$, then the maximum value of $\left\lfloor \frac{1}{\rho} \log \left( q^{2} c / |\vartheta_{2}| X_{0}\right) \right \rfloor$ is 117.892, which is an upper bound on $l$ according to Lemma \ref{lem 2.4}.

Now, we fix $1 \leq l \leq 117$ and consider 
\begin{equation}\label{eq 3.37}
\Gamma_{2} := \log(\Lambda_{2} + 1) = n \log \gamma - m \log 10 + \log \left(81(2\gamma - 2) g_{k}(\gamma) / (ab(10^{l} -1)) \right).   \end{equation}
Assume that $m \geq 2$. Thus from \eqref{eq 3.32}, we get $|\Lambda_{2}| < 0.19$. Then by Lemma \ref{lem 2.5}, we have 
\begin{equation}\label{eq 3.38}
 |\Gamma_{2}| < \frac{- \log 0.81}{0.19}\cdot |\Lambda_{2}|  < 21.1 \cdot \exp (-2.3 \cdot m). 
\end{equation}
Now to apply Lemma \ref{lem 2.4} to $\Gamma_{2}$, set
\[
c := 21.1, \quad \rho := 2.3, \quad \vartheta:= \frac{\log \gamma}{\log 10}, \quad \vartheta_{1}:=  \log \gamma, \quad \vartheta_{2}:= - \log 10,
\]
and
\[ \psi_{l} := \frac{\log \left(81 (2\gamma-2)g_{k}(\gamma)/ (ab(10^{l} -1)) \right)}{\log 10}, \quad \delta_{l}:= \log \left(81 (2\gamma-2)g_{k}(\gamma)/ (ab(10^{l} -1)) \right).
\]
Again, for any $(k, l) \in [3, 640] \times [1,117]$, we find a good approximation of $\gamma$ and a convergent $p_{t}/q_{t}$ of the continued fraction of $\vartheta$ such that $q_{t} > X_{0}$, where $X_{0} = \left\lfloor 5.1 \times 10^{29} k^{9} \log^{5} k \right \rfloor$ which is an upper bound of $\max \{ n, m \}$. After that, we apply Lemma \ref{lem 2.4} to inequality \eqref{eq 3.38}.  Using \textit{Mathematica} we found  that for $k \in  (k, l) \in [3,640] \times [1,117]$, then the maximum value of $\left\lfloor \frac{1}{\rho} \log \left( q^{2} c / |\vartheta_{2}| X_{0}\right) \right \rfloor$ is 117.91, which is an upper bound on $m$ according to Lemma \ref{lem 2.4}.

Thus, we derive that all the possible solutions $(a, b, k, l, m, n)$ of the equation \eqref{eq 1.2} for which $k \in [3, 640]$ have $1 \leq l \leq m \leq 117$. Hence we apply inequalities \eqref{eq 3.24} to find $n \leq 782$.

Finally, we used \textit{Mathematica} to  compare $Q_{n}^{(k)}$ and $\frac{a(10^{l}-1)}{9} \cdot \frac{b(10^{m}-1)}{9}$ for the range $5 \leq n \leq 782$ and $1 \leq l \leq m \leq 117$, with $ m+l < 0.42n + 2.31$ and checked that the only solution of the equation \eqref{eq 1.2} are $Q_{5}^{(k)}$ and $Q_{7}^{(4)}$.

\subsection{The case of large $k$}
In this subsection, we examine the case where $k > 640$.
\subsection{An absolute upper bound on $k$}
For $k > 640$, the following inequalities hold
\[
n < 5.1 \times 10^{29} k^{9} \log^{5} k < \phi^{k/2}.
\]
From Lemma \ref{lem 2.9}, we have
\begin{equation}\label{eq 3.39}
(2 \gamma -2 )g_{k}(\gamma)\gamma^{n} = \frac{2 \phi^{2n+1}}{\phi+2}(1+ \xi),  \quad \text{where} \quad |\xi| < \frac{1.25}{\phi^{k/2}}.   
\end{equation}
Substituting \eqref{eq 3.39} into \eqref{eq 3.25}, provides us the inequality
\begin{align*}
    \left|\frac{ab 10^{l+m}}{81} - \frac{2 \phi^{2n+1}}{\phi+2} \right| & \leq \left|\frac{ab 10^{l+m}}{81} - g_{k}(\gamma) \gamma^{n} \right| + \frac{2 \phi^{2n+1}}{\phi+2}|\xi| \\
    & \leq \frac{ab}{81}(10^{l} +10^{m} +1 ) + 2+ \frac{2 \phi^{2n+1}}{\phi+2} \cdot \frac{1.25}{\phi^{k/2}} \\
    & \leq  2 \cdot 10^{m} + 3 + \frac{2 \phi^{2n+1}}{\phi+2} \cdot \frac{1.25}{\phi^{k/2}}.
\end{align*}
Dividing both sides of the above inequality  by $\frac{2 \phi^{2n+1}}{\phi+2}$ and using the facts $10^{m+l-2} < 2\gamma^{n} < 2 \phi^{2n}$ together with $n \geq k + 2$, it yields
\begin{equation}\label{eq 3.40}
  \left|\frac{ab 10^{l+m}}{162}(\phi+2) \phi^{-(2n+1)} - 1 \right| < \frac{2(\phi +2)}{\phi \cdot 10^{l-2}} + \frac{3(\phi +2)/2 \phi +1.25}{\phi^{k/2}} 
  < \frac{452}{\phi^{\lambda}},
\end{equation}
where $\lambda := \min \{k/2, \theta l\}$ and $\theta := \log 10 / \log \phi$.
\\
In order to apply Theorem \ref{thm2} in \eqref{eq 3.40}, set 
\[
\Lambda_{3} := \frac{ab 10^{l+m}}{162}(\phi+2) \phi^{-(2n+1)} - 1.
\]
If  $\Lambda_{3} = 0$, then 
\[
\frac{ab 10^{l+m}}{162}(\phi+2) = \phi^{2n+1}.
\]
Conjugating the above relation in $\mathbb{Q}(\sqrt{5})$, it gives
\[
4 < \frac{ab 10^{l+m}}{162}(\overline{\phi}+2) = \overline{\phi}^{2n+1} < 1,
\]
where $\overline{\phi} = (1 - \sqrt{5})/2$. Therefore, $\Lambda_{3} \neq 0$. We take $s := 3$,
\[
 \eta_{1}:= \frac{ab(\phi+2)}{162} , \quad  \eta_{2}:= \phi, \quad  \eta_{3}:= 10,
\]
and
\[ 
a_{1}:= 1, \quad a_{2}:= -(2n+1), \quad a_{3}:= m+l.
\]
Note that $\mathbb{L} := \mathbb{Q}(\sqrt{5})$ contains $\eta_{1}, \eta_{2}, \eta_{3}$ and has $d_{\mathbb{L}}:= 2$. Since $m+l < 2n+1$, we deduce that $D:= \max\{|a_{1}|,|a_{2}|,|a_{3}|\}= 2n+1$. Moreover, since $h(\eta_{2}) = \frac{\log \phi}{2}$, $h(\eta_{3}) = \log 10$ and
\begin{align*}
 h(\eta_{1}) & \leq  h(a) + h(b) + h(162) + h(\phi)+ h(2) + \log 2 \\
 & \leq 4 \log 9 +3 \log 2 + \frac{\log \phi}{2} < 11.2.
 \end{align*}
Therefore, we may take
\[
B_{1} := 22.4, \quad B_{2} := \log \phi \quad \text{and} \quad B_{3} := 2 \log 10.
\]
As before, applying Theorem \ref{thm2}, we have 
\begin{equation}\label{eq 3.41}
- \log |\Lambda_{3}| < 1.1 \times 10^{14} \log n,   
\end{equation}
where $1 + \log (2n+1) < 2.2 \log n $ holds for all $n \geq 5$. Comparing \eqref{eq 3.40} and \eqref{eq 3.41}, it yields
\begin{equation}\label{eq 3.42}
\lambda < 2.3 \times 10^{14} \log n.    
\end{equation}
Now, we distinguish two cases according to $\lambda$. \\
\textbf{Case 1} $\lambda = k/2$. According to Lemma \ref{lem3.1} and inequality \eqref{eq 3.32}, it yields
\[
k < 4.6 \times 10^{14} \log \left( 5.1 \times 10^{29} k^{9} \log^{5} k\right).
\]
Solving the above inequality and putting the result in the inequality of Lemma \ref{lem3.1}, it yields
\begin{equation}\label{eq 3.43}
k < 7.15 \times 10^{17} \quad \text{and} \quad n < 2.93 \times 10^{198}.     
\end{equation}
\textbf{Case 2} $\lambda = \theta l$. It follows in this case from \eqref{eq 3.32} that
\begin{equation}\label{eq 3.44}
    l < 4.81 \times 10^{13} \log n.
\end{equation}
Returning to equation \eqref{eq 3.30} and using \eqref{eq 3.39} in it, we obtain
\begin{align*}
    \left | \frac{b\cdot 10^{m}}{9} - \frac{18 \cdot \phi^{2n +1}}{a(10^{l}-1)(\phi + 2)} \right| & \leq \left| \frac{b\cdot 10^{m}}{9} - \frac{9(2\gamma - 2)g_{k}(\gamma)}{a(10^{l}-1)} \right| + \frac{18 \phi^{2n+1}}{a(10^{l}-1)(\phi + 2)} |\xi| \\
    & \leq \frac{18}{a(10^{l}-1)} + 1 + \frac{18 \phi^{2n+1}}{a(10^{l}-1)(\phi + 2)} \cdot \frac{1.25}{\phi^{k/2}},
\end{align*}
which implies that 
\begin{equation}\label{eq 3.45}
  \left | \frac{ab(10^{l}-1)}{162} (\phi +2) \phi^{-(2n+1)} 10^{m} -1 \right|   \leq \frac{\phi +2}{\phi^{2n+1}} + \frac{9 (\phi +2)}{2 \phi^{2n+1}} + \frac{1.25}{\phi^{k/2}} < \frac{22}{\phi^{k/2}},
\end{equation}
where $n \geq k+2$ and $\theta l < \frac{k}{2}$. Applying Theorem \ref{thm2} to \eqref{eq 3.45} with the data $s:= 3$ and 
\[
\Lambda_{4} := \frac{ab(10^{l}-1)(\phi +2)}{162}  \phi^{-(2n+1)} 10^{m} - 1,
\]
where 
\[
 \eta_{1}:= \frac{ab(10^{l}-1)(\phi+2)}{162} , \quad  \eta_{2}:= \phi, \quad  \eta_{3}:= 10,
\]
and
\[ 
a_{1}:= 1, \quad a_{2}:= -(2n+1), \quad a_{3}:= m .
\]
 Here we can show that $\Lambda_{4} \neq 0$ by using the same method as before to prove $\Lambda_{3} \neq 0$. As previously calculated, we can take
 \[
 d_{\mathbb{L}}:= 2, \quad B_{2}:= \log \phi, \quad \quad B_{3}:=  2 \log 10  \quad  and \quad D:= 2n+1.
 \]
Moreover, by using \eqref{eq 3.44}, we have 
\begin{align*}
h(\eta_{1}) & \leq h(a) + h(b) + h(162)  + h(10^{l}-1) + h(\phi +2)  \\ 
& \leq 4 \log 9 + 4 \log 2+ l \log 10 + \frac{\log \phi}{2} \\
& \leq 4 \log 9 + 4 \log 2 + 4.81 \times 10^{13} \log n \times \log 10 + \frac{\log \phi}{2} \\
& \leq 11.1 \times 10^{13} \log n. 
\end{align*}
Hence, we can take $B_{1} := 22.2 \times 10^{13} \log n.$ According to Theorem \ref{thm2} and inequality \eqref{eq 3.45}, we obtain 
\[
\exp{(-1.1 \times 10^{27} (\log n)^{2})} < \frac{22}{\phi^{k/2}}.
\]
As a result, we get 
\begin{equation}\label{eq 3.46}
k < 4.58 \times 10^{27} (\log n)^{2}.    
\end{equation}
From inequality \eqref{eq 3.46} and Lemma \ref{lem3.1}, we have
\begin{equation*}
    k < 4.1 \times 10^{34} \quad \text{and} \quad  n < 5.37 \times 10^{350}.
\end{equation*}
The result established in this subsection is summarized in the following lemma.
\begin{lemma}\label{lem3.2}
If $(a, b, k, l, m, n)$ is a solution of equation \eqref{eq 1.2} with $k > 640$ and $n \geq k+2$, then $k$ and $n$ are bounded as
\begin{equation}\label{eq 3.47}
k < 4.1 \times 10^{34} \quad \text{and} \quad  n < 5.37 \times 10^{350}.
\end{equation}
\end{lemma}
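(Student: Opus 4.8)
The plan is to exploit the fact that for $k > 640$ the bound of Lemma \ref{lem3.1} gives $n < 5.1 \times 10^{29} k^9 \log^5 k < \phi^{k/2}$, which places us precisely in the regime where Lemma \ref{lem 2.9} applies. The essential difficulty to circumvent is that working directly in $\mathbb{Q}(\gamma)$ is hopeless here: its degree $d_{\mathbb{L}} = k$ is large, and Matveev's bound (Theorem \ref{thm2}) grows like $d_{\mathbb{L}}^2$, so a naive application would make the resulting bound on $k$ self-defeating. The decisive idea is therefore to use Lemma \ref{lem 2.9} to replace the dominant term $(2\gamma-2)g_k(\gamma)\gamma^n$ by $\frac{2\phi^{2n+1}}{\phi+2}(1+\xi)$ with $|\xi| < 1.25/\phi^{k/2}$, thereby transferring all the linear-forms-in-logarithms work into the \emph{fixed} quadratic field $\mathbb{Q}(\sqrt5)$ of degree $2$, where Theorem \ref{thm2} is effective.

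First I would substitute this approximation into the rearranged equation \eqref{eq 3.25}, collecting the $\xi$-contribution together with the repdigit remainder terms. Dividing through by $\frac{2\phi^{2n+1}}{\phi+2}$ produces the linear form $\Lambda_3 := \frac{ab10^{l+m}}{162}(\phi+2)\phi^{-(2n+1)} - 1$ together with an upper bound $|\Lambda_3| < 452/\phi^{\lambda}$, where $\lambda := \min\{k/2,\theta l\}$ and $\theta := \log 10/\log\phi$; the minimum appears because the $\xi$-error decays like $\phi^{-k/2}$ while the repdigit error decays like $10^{-(l-2)} = \phi^{-(\theta l - 2\theta)}$. Nonvanishing of $\Lambda_3$ I would establish by conjugation in $\mathbb{Q}(\sqrt5)$: if $\Lambda_3 = 0$ then applying the automorphism $\phi \mapsto \overline{\phi} = (1-\sqrt5)/2$ forces a positive quantity known to exceed $4$ (since $m+l$ is large) to equal $\overline{\phi}^{2n+1} \in (-1,0)$, a contradiction. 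Applying Theorem \ref{thm2} with $\eta_1 = ab(\phi+2)/162$, $\eta_2 = \phi$, $\eta_3 = 10$, $d_{\mathbb{L}} = 2$, and $D = 2n+1$ yields $-\log|\Lambda_3| < 1.1\times 10^{14}\log n$, and comparison with the upper bound gives $\lambda < 2.3\times 10^{14}\log n$.

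The argument then splits according to which term realizes the minimum in $\lambda$. In \textbf{Case 1}, where $\lambda = k/2$, the resulting inequality $k < 4.6\times 10^{14}\log(5.1\times 10^{29}k^9\log^5 k)$ is self-referential in $k$; resolving it and feeding the outcome back into Lemma \ref{lem3.1} yields $k < 7.15\times 10^{17}$ and $n < 2.93\times 10^{198}$. In \textbf{Case 2}, where $\lambda = \theta l$, one only extracts the weaker bound $l < 4.81\times 10^{13}\log n$, which does not by itself control $k$. Here I would return to the second rearrangement \eqref{eq 3.30}, apply Lemma \ref{lem 2.9} a second time, and form the analogous form $\Lambda_4 := \frac{ab(10^l-1)(\phi+2)}{162}\phi^{-(2n+1)}10^m - 1$ with $|\Lambda_4| < 22/\phi^{k/2}$. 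A second application of Theorem \ref{thm2}, now with $h(\eta_1)$ absorbing the $l\log 10$ contribution bounded via the Case~2 estimate on $l$, gives $k < 4.58\times 10^{27}(\log n)^2$; inserting this into Lemma \ref{lem3.1} converts it to an $n$-bound. Taking the larger of the two cases produces the stated $k < 4.1\times 10^{34}$ and $n < 5.37\times 10^{350}$.

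The main obstacle I anticipate is the careful bookkeeping of the two competing error terms so that the $\min\{k/2,\theta l\}$ structure emerges cleanly, and, in Case~2, ensuring that the $l$-dependence newly introduced into $h(\eta_1)$ for $\Lambda_4$ stays controlled — one must first secure the crude bound $l < 4.81\times 10^{13}\log n$ from the $\Lambda_3$ analysis before the $\Lambda_4$ analysis can close the loop on $k$. Finally, the chain of self-referential inequalities (each $k$-bound depending on $\log n$ and each $n$-bound on $k$ through Lemma \ref{lem3.1}) must be resolved in the correct order, with Lemma \ref{lem 2.6} or direct iteration serving to disentangle them.
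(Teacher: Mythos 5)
Your proposal is correct and follows essentially the same route as the paper: the passage to $\mathbb{Q}(\sqrt{5})$ via Lemma \ref{lem 2.9}, the two linear forms $\Lambda_3$ and $\Lambda_4$, the nonvanishing arguments by conjugation, the case split on $\lambda = \min\{k/2, \theta l\}$, and the final numerical bounds all match the paper's proof. Nothing essential is missing.
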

\subsection{Reducing the bound on $k$}
Let us define
\[ 
\Gamma_{3} := \log(\Lambda_{3} + 1) = (m+l) \log 10 -(2n+1) \log \phi - \log \left(ab(\phi  + 2) / 162 \right).     
\]
Assume that $l \geq 3$, so it comes from \eqref{eq 3.40} that $|\Lambda_{3}| < 0.46$. Now by Lemma \ref{lem 2.5}, we have 
\begin{equation}\label{eq 3.48}
 |\Gamma_{3}| < \frac{- \log (0.54)}{0.46}\cdot |\Lambda_{3}|  < 606 \cdot \exp (-0.48 \cdot \lambda). 
\end{equation}
For $a,b \in \{1,2, \cdots ,9 \}$, we apply Lemma \ref{lem 2.4} to $\Gamma_{3}$ with the parameters
\[
c := 606, \quad \rho := 0.48 , \quad \psi := \frac{\log \left(ab (\phi + 2)/ 162 \right)}{\log 10},
\]
and
\[ \vartheta:= \frac{\log \phi}{\log 10}, \quad \vartheta_{1}:= - \log \phi, \quad \vartheta_{2}:=  \log 10, \quad \delta:= \log \left(ab (\phi + 2)/ 162 \right).
\]
Further $m+l < n$ and $n < 5.37 \times 10^{350}$ by Lemma \ref{lem3.2},  then we can take $X_{0} = 5.37 \times 10^{350}$. Let 
\[
 [a_{0}, a_{1}, a_{2},\dots] = [0, 4, 1, 3, 1, 1, 1, 6, 4, 2, 1, 10, 1, 4, 46, 3,\dots]
 \]
 be the continued fraction expansion of $\vartheta$ . With the help of \textit{Mathematica}, we find that 
$q_{676} \approx 1.91 \times 10^{355}$ satisfies the conditions of Lemma \ref{lem 2.4}. Furthermore, according
to Lemma \ref{lem 2.4} we obtain $\lambda < 1738$.
\\
\textbf{Case 1} $\lambda = k/2$. In this case we obtain that
\begin{equation}\label{eq 3.49}
    k \leq 3476.
\end{equation}
\textbf{Case 2} $\lambda = \theta l$. In this case we get that $l \leq 363$. Now let 
\[
\Gamma_{4} := \log(\Lambda_{4} + 1) = m \log 10 -(2n+1) \log \phi + \log \left(ab(10^{l}-1)(\phi  + 2) / 162 \right).
\]
Since $k > 640$, then from \eqref{eq 3.45}, we have  $|\Lambda_{4}| < 0.01$. Hence by using Lemma \ref{lem 2.5}, we find that
\begin{equation}\label{eq 3.50}
 |\Gamma_{4}| < \frac{- \log (0.99)}{0.01}\cdot |\Gamma_{4}|  < 22.12  \cdot \exp (-0.24 \cdot k ). 
\end{equation}
For $a,b \in \{1,2, \cdots ,9 \}$ and $1 \leq l \leq 363$, we apply Lemma \ref{lem 2.4} to $\Gamma_{4}$ with the parameters
\[
c := 22.12, \quad \rho := 0.24 , \quad \psi := \frac{\log \left(ab(10^{l} - 1) (\phi + 2)/ 162 \right)}{\log 10}, \quad 1 \leq l \leq 363.
\]
and
\[ \vartheta:= \frac{\log \phi}{\log 10}, \quad \vartheta_{1}:= - \log \phi, \quad \vartheta_{2}:=  \log 10, \quad \delta:= \log \left(ab (10^{l} - 1) (\phi + 2)/ 162 \right).
\]
Further, Lemma \ref{lem3.2} implies that we can take $X_{0} = 5.34 \times 10^{350}$. Using
\textit{Mathematica}, we find that $q_{680} \approx 2.12 \times 10^{359}$ satisfies the conditions of Lemma \ref{lem 2.4}. Furthermore, according to Lemma \ref{lem 2.4} we obtain $k \leq 3494$. With this new upper bound on $k$, we get
\[
n < 1.44 \times 10^{66}.
\]
We now repeat the above process  with $X_{0} = 1.44 \times 10^{66}$, we obtain that
$k \leq 683 $. Hence, we deduce that
\[
n < 1.96 \times 10^{59}.
\]
In the third application with $X_{0} = 1.96 \times 10^{59}$, we get that $k < 634$, which contradicts our assumption $k > 640$. This completes the proof of Theorem \ref{thm1}.

\vspace{05mm} \noindent \footnotesize
\begin{minipage}[b]{90cm}
\large{School of Applied Sciences, \\ 
KIIT University, Bhubaneswar, \\ 
Bhubaneswar 751024, Odisha, India. \\
Email: bijan.patelfma@kiit.ac.in}
\end{minipage}

\vspace{05mm} \noindent \footnotesize
\begin{minipage}[b]{90cm}
\large{School of Applied Sciences, \\ 
KIIT University, Bhubaneswar, \\ 
Bhubaneswar 751024, Odisha, India. \\
Email: bptbibhu@gmail.com}
\end{minipage}

\end{document}